\theoremstyle{plain}
\newtheorem{theorem}{Theorem}[section]
\newtheorem{corollary}{Corollary}[section]
\theoremstyle{definition}
\newtheorem{definition}{Definition}[section]
\newtheorem{example}{Example}[section]
\newtheorem{remark}{Remark}[section]
\numberwithin{equation}{section}
\newcommand*{\Zset}{\mathbb{Z}}
\newcommand*{\Cset}{\mathbb{C}}  
\begin{document}
\title{\textbf{Determinant and inverse of join matrices on two sets}}
\author{}
\date{}
\maketitle
\begin{center}
\textsc{Mika Mattila and Pentti Haukkanen$^*$}\\
School of Information Sciences\\
FI-33014 University of Tampere, Finland\\[5mm]
May 31, 2011\\[1cm]
In honor of Abraham Berman, Moshe Goldberg, and Raphael Loewy\\[1cm]
\end{center}
{\bf Abstract} Let $(P,\preceq)$ be a lattice and $f$ a complex-valued function on $P$. We define meet and join matrices on two arbitrary subsets $X$ and $Y$ of $P$ by $(X,Y)_f=(f(x_i\wedge y_j))$ and $[X,Y]_f=(f(x_i\vee x_j))$ respectively. Here we present expressions for the determinant and the inverse of $[X,Y]_f$. Our main goal is to cover the case when $f$ is not semimultiplicative since the formulas presented earlier for $[X,Y]_f$ cannot be applied in this situation. In cases when $f$ is semimultiplicative we obtain several new and known formulas for the determinant and inverse of $(X,Y)_f$ and the usual meet and join matrices $(S)_f$ and $[S]_f$. We also apply these formulas to LCM, MAX, GCD and MIN matrices, which are special cases of join and meet matrices.
\\[5mm]
\emph{Key words and phrases:} Join matrix, Meet matrix, Semimultiplicative function, LCM matrix, GCD matrix, MAX matrix\\
\emph{AMS Subject Classification:} 11C20, 15B36, 06B99\\[5mm]
\emph{$^\ast$Corresponding\ author.}\ \textit{Tel.:}\ +358\ 31\ 3551\ 7030,\ \textit{fax:}\ +358\ 31\ 3551\ 6157\\
\emph{E-mail addresses:}\ mika.mattila@uta.fi\ (M. Mattila),\ pentti.haukkanen@uta.fi\ (P. Haukkanen)

\newpage

\section{Introduction}

Let $S=\{x_1,x_2,\ldots,x_n\}$ be a set of distinct positive integers, and let $f$ be an arithmetical function. Let $(S)_f$ denote the $n\times n$ matrix having $f((x_i,x_j))$, the image of the greatest common divisor of $x_i$ and $x_j$, as its $ij$ entry. Analogously, let $[S]_f$ denote the $n\times n$ matrix having $f([x_i,x_j])$, the image of the least common multiple of $x_i$ and $x_j$, as its $ij$ entry. That is, $(S)_f=(f((x_i,x_j)))$ and $[S]_f=(f([x_i,x_j]))$. The matrices $(S)_f$ and $[S]_f$ are referred to as the GCD and LCM matrices on $S$ associated with $f$, respectively. The set $S$ is said to be GCD-closed if $(x_i,x_j)\in S$ whenever $x_i,x_j\in S$, and the set $S$ is said to be factor-closed if it contains every divisor of $x$ for any $x\in S$. Clearly every factor-closed set is GCD-closed but the converse does not hold. 

In 1875 Smith \cite{Sm} calculated $\det(S)_f$ when $S$ is factor-closed and $\det[S]_f$ in a more special case. Since then a large number of results on GCD and LCM matrices have been presented in the literature. See, for example \cite{A, BeL, BD, BL, HS, HWS, Ho, HE, HL1, HL2, L}. Haukkanen \cite{H} generalized the concept of a GCD matrix into a meet matrix and later Korkee and Haukkanen \cite{KH2} did the same with the concepts of LCM and join matrices. These generalizations happen as follows.

Let $(P,\preceq)$ be a locally finite lattice, let $S=\{x_1,x_2,\ldots,x_n\}$ be a subset of $P$ and let $f$ be a complex-valued function on $P$. The $n\times n$ matrix $(S)_f=(f(x_i\wedge x_j))$ is called the meet matrix on $S$ associated with $f$ and the $n\times n$ matrix $[S]_f=(f(x_i\vee x_j))$ is called the join matrix on $S$ associated with $f$. If $(P,\preceq)=(\Zset^+,|)$, then meet and join matrices become respectively ordinary GCD and LCM matrices. However, some additional assumptions regarding the lattice $(P,\preceq)$ are still needed and we analyse these in Section 2. 

The properties of meet and join matrices have been studied by many authors (see, e.g., \cite{AST, H, HWS, HoS, IHM, KH2, KH3, KH5, Lu, O, R, SC}). Haukkanen \cite{H} calculated the determinant of $(S)_{f}$ on an arbitrary set $S$ and obtained the inverse of $(S)_{f}$ on a lower-closed set $S$ and Korkee and Haukkanen \cite{KH1} obtained the inverse of $(S)_{f}$ on a meet-closed set $S$. Korkee and Haukkanen \cite{KH2} present, among others,
formulas for the determinant and inverse of $[S]_{f}$ on meet-closed, join-closed, lower-closed
and upper-closed sets $S$.

Most recently,  Altinisik, Tuglu and Haukkanen \cite{ATH} generalized the concepts of meet and join matrices and defined meet and join matrices on two sets. Later these matrices were also treated in \cite{KH4}. Next we present the same definitions. 

Let $X=\{x_1,x_2,\ldots,x_n\}$ and $Y=\{y_1,y_2,\ldots,y_n\}$ be two subsets of $P$. We define the meet matrix on $X$ and $Y$ with respect to $f$ as $(X,Y)_f=(f(x_i\wedge y_j))$. In particular, when $S=X=Y=\{x_1,x_2,\ldots,x_n\}$, we have $(S,S)_f=(S)_f$. Analogously, we define the join matrix on $X$ and $Y$ with respect to $f$ as $[X,Y]_f=(f(x_i\vee y_j))$. In particular, $[S,S]_f=[S]_f$.

In \cite{ATH} the authors presented formulas for the determinant and the inverse of the matrix $(X,Y)_f$. Applying these formulas they derived similar formulas for the matrix $[X,Y]_{1/f}$ with respect to semimultiplicative functions $f$ with $f(x)\neq0$ for all $x\in P$. The cases when $f$ is not semimultiplicative or $f(x)=0$ for some $x\in P$, however, were excluded from the examination. In this paper we give formulas that can also be used in these circumstances. We go through the same examinations presented in \cite{ATH} but this time dually from the point of view of the matrix $[X,Y]_f$. That is we present expressions for the determinant and the inverse of $[X,Y]_f$ on arbitrary sets $X$ and $Y$. In the case when $X=Y=S$ we obtain a determinant formula for $[S]_f$ and a formula for the inverse of $[S]_f$ on arbitrary set $S$. We also derive formulas for the special cases when $S$ is join-closed and upper-closed up to $\vee S$. Similar kind of determinant formulas for $(S)_f$ and $[S]_f$ have already been presented in \cite{KH2}, although they were obtained and presented by a different approach. By setting $(P,\preceq)=(\Zset^+,|)$ we obtain corollaries for LCM matrices, and as another special case we also consider MAX and MIN matrices. In case when $(P,\preceq)=(\Zset,\leq)$, where $\leq$ is the natural ordering of the integers, the MAX and MIN matrices of the set $S$ are the matrices $[S]_f$ and $(S)_f$ respectively. MAX and MIN matrices have not been addressed before in this context.

\section{Preliminaries}

In the preceding section we defined the concept of GCD-closed set. Similarly, the set $S$ is said to be LCM-closed if $[x_i,x_j]\in S$ whenever $x_i,x_j\in S$. Since the lattice $(\Zset^+,|)$ does not have a greatest element, we need to define the dual concept for factor-closed set in a more special manner.

\begin{definition}\label{def1}
Let $\text{lcm}\,S=[x_1,x_2,\ldots,x_n]$, the least common multiple of all the elements in $S$. The set $S$ is said to be multiple-closed up to $\text{lcm}\,S$ if $x\in S$ whenever $y\in S$, $y\,|\,x$ and $x\,|\,\text{lcm}\,S$. In addition, let \[M_S=\{y\in\Zset^+\ \big|\ y\,|\,\text{lcm}\,S\ \text{and}\ x_i\,|\,y\ \text{for some}\ x_i\in S\}=\bigcup_{i=1}^n[x_i,\text{lcm}\,S],\]
where
\[[x_i,\text{lcm}\,S]=\{y\in\Zset^+\ \big|\ x_i\,|\,y\ \text{and}\ y\,|\,\text{lcm}\,S\}.\]
\end{definition}

Again, if $S$ is multiple-closed up to $\text{lcm}\,S$, then it is also LCM-closed, but an LCM-closed set is not necessarily multiple-closed up to $\text{lcm}\,S$. Obviously the set $M_S$ is multiple-closed up to $\text{lcm}\,S$, but the semilattice $(M_S,|)$ also has the advantage of having the greatest element over the lattice $(\Zset^+,|)$. Next we need corresponding definitions for a more general case.

Let $(P,\preceq)$ be a lattice. The set $S\subseteq P$ is said to be lower-closed (resp. upper-closed) if for every $x,y\in P$ with $x\in S$ and $y\preceq x$ (resp. $x\preceq y$), we have $y\in S$. The set $S$ is said to be meet-closed (resp. join-closed) if for every $x,y\in S$, we have $x\wedge y\in S$ (resp. $x\vee y\in S$).

If every principal order filter of the lattice $(P,\preceq)$ is finite, the methods presented in the following sections can be applied to the lattice $(P,\preceq)$ directly. If the lattice $(P,\preceq)$ does not satisfy this property (which is the case when, for example, $P=\Zset^+$ and $\preceq=|$), it is always possible to carry out the following procedures in an appropriate subsemilattice of $(P,\preceq)$. The most straightforward method is to generalize Definition \ref{def1}, which is done as follows.

\begin{definition}\label{def2}
Let $\vee S=x_1\vee x_2\vee\cdots\vee x_n$. The set $S$ is said to be upper-closed up to $\vee S$ if $x\in S$ whenever $y\in S$, $y\preceq x$ and $x\preceq \vee S$. In addition, let \[P_S=\{y\in L\ \big|\ y\preceq \vee S\ \text{and}\ x_i\preceq y\ \text{for some}\ x_i\in S\}=\bigcup_{i=1}^n[x_i,\vee S],\]
where
\[[x_i,\vee S]=\{y\in L\ \big|\ x_i\preceq y\ \text{and}\ y\preceq\vee S\}.\]
\end{definition}

Another possibility would be to restrict our consideration to $(\langle S\rangle,\preceq)$, the join-subsemilattice of $(P,\preceq)$ generated by the set $S$. Usually this would also reduce the number of computations needed. For example, the values of the Möbius function of $(\langle S\rangle,\preceq)$ are often much easier to calculate than the values of the Möbius function of $(P_S,\preceq)$ (see \cite[Section IV.1]{Aig}). And if we consider $S$ as a subset of the meet-subsemilattice generated by itself, the set $S$ is meet-closed iff it is lower-closed. Similarly, the terms join-closed and upper-closed coincide in the join-subsemilattice $(\langle S\rangle,\preceq)$. This is another benefit of restricting to $(\langle S\rangle,\preceq)$. This method is not, however, very convenient when considering the lattice $(\Zset^+,|)$. The Möbius function of $(\langle S\rangle,\preceq)$, where $S\subset \Zset^+$, has often very little in common with the number-theoretic Möbius function, which would likely cause confusion. For this reason we give our formulas in a form that fits both for the types of lattices defined in Definition \ref{def2} and for the lattice $(\langle S\rangle,\preceq)$.

Let $(P,\preceq)$ be a locally finite lattice, and let $f$ be a complex-valued function on $P$. Let $X=\{x_1,x_2,\ldots,x_n\}$ and $Y=\{y_1,y_2,\ldots,y_n\}$ be two subsets of $P$. Let the elements of $X$ and $Y$ be arranged so that $x_i\preceq x_j\Rightarrow i\leq j$. Let $D=\{d_1,d_2\ldots,d_m\}$ be any subset of $P$ containing the elements $x_i\vee y_j$, $i,j=1,2,\ldots n$. Let the elements of $D$ be arranged so that $d_i\preceq d_j\Rightarrow i\leq j$. Then we define the function $\Psi_{D,f}$ on $D$ inductively as
\begin{equation}\label{eq:psi1}
\Psi_{D,f}(d_k)=f(d_k)-\sum_{d_k\prec d_v}\Psi_{D,f}(d_v)
\end{equation}
or equivalently
\begin{equation}\label{eq:psi2}
f(d_k)=\sum_{d_k\preceq d_v}\Psi_{D,f}(d_v).
\end{equation}
Then
\begin{equation}\label{eq:psi3}
\Psi_{D,f}(d_k)=\sum_{d_k\preceq d_v}f(d_v)\mu_D(d_k,d_v),
\end{equation}
where $\mu_D$ is the Möbius function of the poset $(D,\preceq)$ (see \cite[3.7.2 Proposition]{St}).

If $D$ is join-closed, then 
\begin{equation}\label{eq:psijoin}
\Psi_{D,f}(d_{k})
=\sum_{d_{k}\preceq z\atop {d_t\npreceq z\atop k<t}}
                 \sum_{z\preceq w\preceq s} f(w)\mu_{P_D}(z, w),
\end{equation}
where $\mu_{P_D}$ is the Möbius function of $(P_D,\preceq)$, and if $D$ is upper-closed up to $\vee D$, then
{\begin{equation}\label{eq:psiupper}
\Psi_{D,f}(d_{k})=\sum_{d_k\preceq d_v}f(d_v)\mu_{P_D}(d_k,d_v),
\end{equation}}
where $\mu_{P_D}$ is the Möbius function of $(P_D,\preceq)$. The proofs of formulas \eqref{eq:psijoin} and \eqref{eq:psiupper} are dual to the proofs presented in \cite{H}.

\begin{remark}\label{re:sub}
If $D$ is join-closed, then $D=\langle D\rangle$ and $D$ is trivially upper-closed subset of $\langle D\rangle$ in $(\langle D\rangle,\preceq)$. Thus in this case we could also replace the $\mu_D$ in \eqref{eq:psi3} and the $\mu_{P_D}$ in \eqref{eq:psiupper} by $\mu_{\langle D\rangle}$.
\end{remark}

If $(P,\preceq)=(\Zset^+,|)$ and $D$ is multiple-closed up to $\text{lcm}\,D$, then $\mu_D(d_k,d_v)=\mu(d_v/d_k)$ (see \cite[Chapter 7]{M}), where $\mu$ is the number-theoretic Möbius function. In addition, for every $a\in\Zset^+$ and arithmetical function $f$ we may define another arithmetical function $f_a$, where
\[
f_a(n)=f(an)
\]
for every $n\in\Zset^+$. Now from \eqref{eq:psi3} we get
{\begin{align}\label{eq:psimultiple}
\Psi_{D,f}(d_k)&=\sum_{d_k\,|\,d_v}f(d_v)\mu\left(\frac{d_v}{d_k}\right)=\sum_{a\,|\,\frac{\text{lcm}\,D}{d_k}}f(d_ka)\mu(a)\notag\\ 
&=\sum_{a\,|\,\frac{\text{lcm}\,D}{d_k}}(f_{d_k}\mu)(a)=[\zeta\ast(f_{d_k}\mu)]\left(\frac{\text{lcm}\,D}{d_k}\right),
\end{align}}
where $*$ is the Dirichlet convolution of arithmetical functions.

Let  $E(X)=(e_{ij}(X))$ and $E(Y)=(e_{ij}(Y))$ denote the $n\times m$
matrices defined by
\begin{equation}\label{eq:eX}
e_{ij}(X)=\left\{
 \begin{array}{cc}
    1 & \textrm{if }x_{i}\preceq d_{j}\textrm{,} \\
    0 & \textrm{otherwise,}
 \end{array}
\right.
\end{equation}
and
\begin{equation}\label{eq:eY}
e_{ij}(Y)=\left\{
 \begin{array}{cc}
    1 & \textrm{if }y_{i}\preceq d_{j}\textrm{,} \\
    0 & \textrm{otherwise}
 \end{array}
\right.
\end{equation}
respectively.
Clearly $E(X)$ and $E(Y)$ also depend on $D$ but for the sake of brevity
$D$ is omitted from the notation. We also denote
\begin{equation}\label{eq:lambda}
\Lambda_{D,f}=\mathrm{diag}(\Psi_{D,f}(d_{1}),\Psi_{D,f}(d_{2}),
\ldots ,\Psi_{D,f}(d_{m})).
\end{equation}

\setcounter{equation}{0}
\section{A structure theorem}\label{sec:str}

In this section we give a factorization of
the matrix $[X,Y]_{f}=\left(f(x_{i}\vee y_{j})\right)$. A large number of similar factorizations is presented in the literature, for example in \cite{IHM} the matrix $[S]_f$ is factorized in case when $S$ is join-closed. The idea of this kind of factorization may be considered to originate from
P\'olya and Szeg\"o \cite{PS}.

\begin{theorem}\label{th:fac}
\begin{equation}\label{eq:fac}
[X,Y]_{f}=E(X)\Lambda_{D, f}E(Y)^{T}.
\end{equation}
\end{theorem}

\begin{proof}
By (\ref{eq:psi2}) the $ij$ entry of $[X, Y]_{f}$ is
\begin{equation}\label{eq:lemma}
f(x_{i}\vee y_{j})
=\sum\limits_{x_{i}\vee y_{j}\preceq d_{v}}\Psi_{D, f}(d_{v}).
\end{equation}
Now, applying (\ref{eq:eX}), (\ref{eq:eY}) and (\ref{eq:lambda})
 to (\ref{eq:lemma}) we obtain Theorem \ref{th:fac}.
\end{proof} 

\begin{remark}\upshape
The sets $X$ and $Y$ could be allowed to have distinct cardinalities
in Theorems \ref{th:fac} and \ref{jth:fac}.
However, in other results we must assume that these cardinalities coincide.
\end{remark}

\setcounter{equation}{0}
\section{Determinant formulas}\label{sec:det}

In this section we derive formulas for determinants of join matrices.
In Theorem \ref{th:CB} we present an expression for
$\det[X,Y]_{f}$ on arbitrary sets $X$ and $Y$.
Taking $X=Y=S=\{x_{1},x_{2},\ldots ,x_{n}\}$
in Theorem \ref{th:CB} we obtain
a formula for the determinant of usual join matrices $[S]_{f}$ on arbitrary
set $S$,
and further taking $(P,\preceq)=(\Zset^+, |)$ we obtain a formula
for the determinant of LCM matrices on arbitrary set $S$.
In Theorems \ref{th:detSjoin} and \ref{th:detSupper} respectively,
we calculate $\det[S]_{f}$ when $S$ is join-closed and upper-closed up to $\vee S$.
Formulas similar to Theorems \ref{th:detSjoin} and \ref{th:detSupper} but by different approach and notations are given in \cite{KH2}.

\begin{theorem}\label{th:CB}
{\rm (i)} If $n>m$, then $\det[X,Y]_{f}=0$.

\noindent
{\rm (ii)} If $n\leq m$, then
\begin{eqnarray}
\det[X,Y]_{f}
&=&\sum\limits_{1\leq k_{1}<k_{2}<\cdots <k_{n}\leq m}
   \det E(X)_{(k_{1},k_{2},\ldots ,k_{n})}
   \det E(Y)_{(k_{1},k_{2},\ldots ,k_{n})}\nonumber\\
&&\hspace*{3cm}\times \Psi_{D,f}(d_{k_{1}})\Psi_{D,f}(d_{k_{2}})\cdots
   \Psi_{D,f}(d_{k_{n}}).
\end{eqnarray}
\end{theorem}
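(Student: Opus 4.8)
The plan is to read the result directly off the factorization established in Theorem~\ref{th:fac}, namely $[X,Y]_f = E(X)\Lambda_{D,f}E(Y)^T$, by invoking the Cauchy--Binet formula. Here $E(X)$ has size $n\times m$, the matrix $\Lambda_{D,f}$ from \eqref{eq:lambda} is $m\times m$ and diagonal, and $E(Y)^T$ has size $m\times n$, so the common inner dimension of the product is $m$. For part (i), I would simply observe that when $n>m$ the product factors through an $m$-dimensional space, whence its rank is at most $m<n$; an $n\times n$ matrix of rank less than $n$ is singular, so $\det[X,Y]_f=0$.

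For part (ii), with $n\le m$, I would apply the Cauchy--Binet formula to the triple product, handling the diagonal middle factor explicitly. Concretely, I would first absorb $\Lambda_{D,f}$ into $E(Y)^T$ to obtain a single $m\times n$ matrix $C$, so that row $k$ of $C$ equals $\Psi_{D,f}(d_k)$ times row $k$ of $E(Y)^T$. Writing $K=\{k_1<k_2<\cdots<k_n\}\subseteq\{1,\ldots,m\}$ and applying the classical Cauchy--Binet formula to $E(X)\,C$ expresses $\det[X,Y]_f$ as a sum over all such index sets $K$ of the product of the maximal minor of $E(X)$ on columns $K$ with the corresponding maximal minor of $C$. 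Pulling the scalar $\Psi_{D,f}(d_{k_1})\cdots\Psi_{D,f}(d_{k_n})$ out of the $n$ rows of each minor of $C$, and using that the submatrix $(E(Y)^T)$ on rows $K$ is the transpose of $E(Y)$ on columns $K$, so that its minor equals $\det E(Y)_{(k_1,\ldots,k_n)}$, yields precisely the stated sum.

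This argument is essentially mechanical: the only genuine content is the bookkeeping of how the diagonal weights $\Psi_{D,f}(d_{k_j})$ distribute across the Cauchy--Binet expansion. I therefore expect no real obstacle, merely care in keeping the submatrix notation $E(X)_{(k_1,\ldots,k_n)}$ and $E(Y)_{(k_1,\ldots,k_n)}$ consistent and in checking that transposition does not alter the relevant minors.
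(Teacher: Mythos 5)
Your proposal is correct and follows the same route as the paper: the authors likewise obtain the result by applying the Cauchy--Binet formula to the factorization $[X,Y]_f=E(X)\Lambda_{D,f}E(Y)^T$ of Theorem~\ref{th:fac}. Your additional bookkeeping of how the diagonal weights $\Psi_{D,f}(d_{k_j})$ distribute across the minors is just a more explicit rendering of the same argument.
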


\begin{proof}
By Theorem \ref{th:fac}
\begin{equation}
\det [X,Y]_{f}=
\det\left(E(X)\Lambda_{D,f}E(Y)^{T}\right).
\end{equation}
Thus by the Cauchy-Binet formula we obtain Theorem \ref{th:CB}.
\end{proof}

\begin{theorem}\label{th:detSjoin}
If $S$ is join-closed, then
\begin{align}\label{eq:detSjoin}
\det[S]_f&=\prod_{v=1}^n  \Psi_{S,f}(x_{v})
=\prod_{v=1}^n \sum_{x_v\preceq x_t}f(x_t)\mu_S(x_v,x_t)\notag\\ 
&=\prod_{v=1}^n
 \sum_{x_v\preceq z\atop {x_t\,\not\preceq\,z\atop v<t}}
                 \sum_{z\preceq w\preceq \vee S} f(w)\mu_{P_S}(z, w).
\end{align}
\end{theorem}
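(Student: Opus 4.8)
The plan is to specialize the structure theorem (Theorem~\ref{th:fac}) to the case $X=Y=S$ together with the choice $D=S$. Because $S$ is join-closed, every join $x_i\vee x_j$ again lies in $S$, so $D=S$ is an admissible index set (it contains all the required elements $x_i\vee y_j=x_i\vee x_j$), and here $m=n$. Under this choice $E(X)=E(Y)=E(S)$ is the $n\times n$ matrix whose $ij$ entry equals $1$ when $x_i\preceq d_j=x_j$ and $0$ otherwise.

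First I would check that $E(S)$ is upper triangular with $1$'s on the diagonal. The diagonal entries are $1$ since $x_i\preceq x_i$, and any subdiagonal entry (with $i>j$) vanishes: a nonzero entry would force $x_i\preceq x_j$, which by the indexing convention $x_i\preceq x_j\Rightarrow i\leq j$ gives $i\leq j$, contradicting $i>j$. Hence $\det E(S)=1$.

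Feeding the factorization $[S]_f=E(S)\Lambda_{S,f}E(S)^T$ of Theorem~\ref{th:fac} into the multiplicativity of the determinant then gives
\[
\det[S]_f=\det E(S)\,\det\Lambda_{S,f}\,\det E(S)^T=\det\Lambda_{S,f}=\prod_{v=1}^n\Psi_{S,f}(x_v),
\]
since by \eqref{eq:lambda} the matrix $\Lambda_{S,f}$ is diagonal with diagonal entries $\Psi_{S,f}(x_v)$. This proves the first equality in \eqref{eq:detSjoin}.

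The remaining two equalities are merely rewritings of each factor $\Psi_{S,f}(x_v)$. Substituting formula \eqref{eq:psi3} with $D=S$ produces the middle expression, and since $S$ is join-closed, formula \eqref{eq:psijoin} applies verbatim and produces the last one. I expect no real difficulty here; the only points demanding care are confirming that $D=S$ is legitimate---precisely where join-closedness is used---and verifying the triangularity of $E(S)$ from the ordering convention.
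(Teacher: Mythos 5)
Your argument is correct, and it follows the alternative route that the paper itself flags in the remark immediately after the theorem: you specialize the factorization $[S]_f=E(S)\Lambda_{S,f}E(S)^{T}$ of Theorem~\ref{th:fac} with $X=Y=S$ and $D=S$, observe that the ordering convention $x_i\preceq x_j\Rightarrow i\leq j$ makes $E(S)$ unit upper triangular so that $\det E(S)=1$, and conclude by multiplicativity of the determinant. The paper's own proof instead takes $X=Y=S$ in Theorem~\ref{th:CB}, the Cauchy--Binet expansion, where join-closedness forces $m=n$ so that the sum collapses to the single term with $\det E(S)_{(1,2,\ldots,n)}=1$. The two arguments are essentially equivalent in content --- both reduce to the same triangularity observation about $E(S)$ --- but yours is slightly more economical, since it avoids invoking Cauchy--Binet where the factors are already square; the paper's route has the organizational advantage of deriving the join-closed case as a direct specialization of the general determinant formula for arbitrary $X$ and $Y$. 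Your handling of the second and third equalities (via \eqref{eq:psi3} with $D=S$ and via \eqref{eq:psijoin}) matches the paper's, which cites Remark~\ref{re:sub} and \eqref{eq:psijoin} for the same purpose.
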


\begin{proof}
We take $X=Y=S$ in Theorem \ref{th:CB}.
Since $S$ is join-closed, we may further take $\langle D\rangle=D=S$.
Then $m=n$ and
$\det E(S)_{(k_{1},k_{2},\ldots ,k_{n})}
=\det E(S)_{(1, 2,\ldots , n)}=1$ and
so we obtain the first equality in $(\ref{eq:detSjoin})$.
The second equality follows from Remark \ref{re:sub} and the third from $(\ref{eq:psijoin})$.
\end{proof}

\begin{remark}\upshape
Theorem \ref{th:detSjoin} can also be proved
by taking $X=Y=S$ and $D=S$ in Theorem \ref{th:fac}.
\end{remark}

\begin{example}\label{ex1}
Let $(P,\preceq)=(\Zset,\leq)$, where $\leq$ is the natural ordering of the set of integers, and let $S=\{x_1,x_2,\ldots,x_n\}\subset\Zset$, where $x_1<x_2<\cdots<x_n$.  Let $t\in\Cset$ and $f:\Zset\to\Cset$ be such function that $f(k)=k+t$ for all $k\in\Zset$. Since the lattice $(\Zset,\leq)$ is a chain, the set $S$ is trivially both meet and join-closed. Now it follows from Theorem \ref{th:detSjoin} that the determinant of the MAX matrix $[S]_f$ is
\begin{align}
\det[S]_f&=\prod_{v=1}^n \sum_{x_v\preceq x_t}f(x_t)\mu_S(x_v,x_t)\notag\\ 
&=(f(x_1)-f(x_2))(f(x_2)-f(x_3))\cdots(f(x_{n-1})-f(x_n))f(x_n)\notag\\
&=(x_1-x_2)(x_2-x_3)\cdots(x_{n-1}-x_n)(x_n+t).
\end{align}
\end{example}

\begin{theorem}\label{th:detSupper}
If $S$ is upper-closed up to $\vee S$, then
\begin{equation}\label{eq:detSupper}
\det[S]_f=\prod_{v=1}^n \Psi_{S,f}(x_{v})
=\prod_{v=1}^n \sum_{x_v\preceq x_u} f(x_u)\mu(x_v, x_u).
\end{equation}
\end{theorem}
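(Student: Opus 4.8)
The plan is to recognize this as the natural refinement of the join-closed case, since upper-closedness up to $\vee S$ is a stronger hypothesis. First I would check that if $S$ is upper-closed up to $\vee S$, then $S$ is automatically join-closed: given any $x_i,x_j\in S$ we have $x_i\preceq x_i\vee x_j$ and $x_i\vee x_j\preceq\vee S$, so the defining property of Definition \ref{def2} forces $x_i\vee x_j\in S$. Consequently the first equality $\det[S]_f=\prod_{v=1}^n\Psi_{S,f}(x_v)$ is already supplied by Theorem \ref{th:detSjoin}. Alternatively, and more self-containedly, I would derive it by taking $X=Y=S$ and $D=S$ in Theorem \ref{th:CB}: join-closedness guarantees that every $x_i\vee x_j$ lies in $S$, so $D=S$ is an admissible choice and $m=n$. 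Under the ordering $x_i\preceq x_j\Rightarrow i\leq j$, the incidence matrix $E(S)$ is upper triangular with unit diagonal (since $e_{ii}(S)=1$, while $i>j$ forces $\,x_i\not\preceq x_j$ and hence $e_{ij}(S)=0$), so $\det E(S)=1$. Thus the unique surviving term $k_1=1,\dots,k_n=n$ of Theorem \ref{th:CB}(ii) collapses the Cauchy--Binet sum to $\prod_{v=1}^n\Psi_{S,f}(x_v)$.

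For the second equality I would invoke formula \eqref{eq:psiupper} with $D=S$. Because $S$ is upper-closed up to $\vee S$, that formula applies verbatim and yields $\Psi_{S,f}(x_v)=\sum_{x_v\preceq x_u}f(x_u)\,\mu_{P_S}(x_v,x_u)$, where $\mu_{P_S}$ is precisely the M\"obius function denoted $\mu$ in the statement. Substituting this into the product from the previous step gives the displayed identity \eqref{eq:detSupper}.

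I do not anticipate a substantive obstacle, as the statement is essentially a corollary of the factorization of Section \ref{sec:str} together with the auxiliary M\"obius identities of Section 2. The only point requiring genuine care is the legitimacy of the choice $D=S$, which rests on the implication ``upper-closed up to $\vee S$ $\Rightarrow$ join-closed'' established above; this is exactly what permits the cleaner M\"obius expression \eqref{eq:psiupper} in terms of $\mu_{P_S}$, in contrast to the intrinsic $\mu_S$ and the nested sums appearing in Theorem \ref{th:detSjoin}. One should also note that here the two equalities have genuinely different content only through the Möbius function used, so the emphasis of the argument is on justifying that $\Psi_{S,f}(x_v)$ admits the single-sum form \eqref{eq:psiupper} rather than merely the join-closed form of Theorem \ref{th:detSjoin}.
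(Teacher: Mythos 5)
Your argument follows the paper's own proof exactly: the first equality is obtained from Theorem \ref{th:detSjoin} (after noting that upper-closedness up to $\vee S$ implies join-closedness, a step the paper leaves implicit), and the second equality is read off from \eqref{eq:psiupper} with $D=S$. The additional observations (the direct Cauchy--Binet derivation and the identification of $\mu_{P_S}$ with the $\mu$ of the statement, which holds since $P_S=S$ here) are correct elaborations rather than a different route.
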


\begin{proof}
The first equality in $(\ref{eq:detSupper})$
follows from $(\ref{eq:detSjoin})$.
The second equality follows from  $(\ref{eq:psiupper})$.
\end{proof}

\begin{example}\label{ex2}
Let $(P,\preceq)$, $S$ and $f$ be as in Example \ref{ex1} and let $x_i=x_1+(i-1)$ for every $x_i\in S$. Now the set $S$ is clearly upper-closed up to $x_n=x_1+(n-1)$ and from Theorem \ref{th:detSupper} we get the determinant of the MAX matrix $[S]_f$ as
\begin{align}
\det[S]_f&=\prod_{v=1}^n \sum_{x_v\preceq x_t}f(x_t)\mu(x_v,x_t)\notag\\ 
&=(f(x_1)-f(x_2))(f(x_2)-f(x_3))\cdots(f(x_{n-1})-f(x_n))f(x_n)\notag\\
&=(-1)^{n-1}(x_n+t).
\end{align}

\end{example}

\begin{corollary}\label{co:HS}
Let $(P,\preceq)=(\Zset^+,|)$, let $S$ be an LCM-closed set of distinct positive integers, and let
$f$ be an arithmetical function. Then the determinant of the LCM matrix $[S]_f$ is
\begin{equation}\label{eq:HS}
\det[S]_f=\prod\limits_{v=1}^n
	\sum\limits_{x_v\,\mid\,z\,\mid\,\mathrm{lcm}\,S\atop {x_t\,\nmid\,z\atop v<t}}
	[\zeta\ast(f_z\mu)]\left(\frac{\mathrm{lcm}\,S}{z}\right).
\end{equation}
\end{corollary}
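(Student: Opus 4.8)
The plan is to specialize Theorem~\ref{th:detSjoin}, or rather its upper-closed analogue, to the number-theoretic lattice $(\Zset^+,|)$. First I would observe that although the corollary assumes only that $S$ is LCM-closed, the natural setting here is the semilattice $(M_S,|)$ from Definition~\ref{def1}, which is multiple-closed up to $\mathrm{lcm}\,S$ and possesses the greatest element $\mathrm{lcm}\,S$. Since the relevant join operation on $\Zset^+$ is the least common multiple, the condition that $S$ be LCM-closed is precisely the statement that $S$ is join-closed in $(\Zset^+,|)$, so Theorem~\ref{th:detSjoin} applies directly with $\vee S=\mathrm{lcm}\,S$ and $\mu_{P_S}$ taken to be the M\"obius function of $(M_S,|)$.

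The heart of the argument is to start from the third expression for $\det[S]_f$ in \eqref{eq:detSjoin}, namely
\[
\det[S]_f=\prod_{v=1}^n
 \sum_{x_v\,\mid\,z\atop {x_t\,\nmid\,z\atop v<t}}
 \sum_{z\,\mid\,w\,\mid\,\mathrm{lcm}\,S} f(w)\mu_{P_S}(z, w),
\]
and then to recognize the inner sum $\sum_{z\mid w\mid\mathrm{lcm}\,S} f(w)\mu_{P_S}(z,w)$ as exactly $\Psi_{D,f}(z)$ for the multiple-closed set $D=M_S$. At that point I would invoke the chain of identities in \eqref{eq:psimultiple}, which was established precisely for the case $(P,\preceq)=(\Zset^+,|)$ with a set multiple-closed up to its lcm, to rewrite this inner sum as $[\zeta\ast(f_z\mu)]\!\left(\tfrac{\mathrm{lcm}\,S}{z}\right)$. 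Substituting this back yields \eqref{eq:HS}.

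The main obstacle, and the step I would treat most carefully, is the bookkeeping connecting the abstract M\"obius function $\mu_{P_S}$ appearing in \eqref{eq:psijoin} with the number-theoretic M\"obius function $\mu$ used in \eqref{eq:psimultiple}. The paper already records that for $(\Zset^+,|)$ with $D$ multiple-closed up to $\mathrm{lcm}\,D$ one has $\mu_D(d_k,d_v)=\mu(d_v/d_k)$, so the translation is legitimate provided one consistently takes $P_S=M_S$; I would make explicit that the outer summation variable $z$ ranges over divisors of $\mathrm{lcm}\,S$ that are multiples of $x_v$ but not of any later $x_t$, which is exactly the index set in \eqref{eq:HS}. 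Once this identification is in place, the remaining passage from \eqref{eq:psi3} to the Dirichlet-convolution form is the routine computation already carried out in \eqref{eq:psimultiple}, so the corollary follows by direct substitution with no further estimation required.
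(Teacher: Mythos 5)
Your proposal is correct and follows exactly the derivation the paper intends (the corollary is stated without an explicit proof, but it is the immediate specialization of the third expression in Theorem~\ref{th:detSjoin} to $(\Zset^+,|)$ with $P_S=M_S$, using $\mu_{M_S}(z,w)=\mu(w/z)$ and the Dirichlet-convolution rewriting of \eqref{eq:psimultiple}). Your opening aside about the ``upper-closed analogue'' is a harmless slip, since you correctly proceed with the join-closed version of the theorem, LCM-closedness being precisely join-closedness in this lattice.
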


\begin{corollary}\label{co:Sm}
Let $(P,\preceq)=(\Zset^+,|)$, let $S$ be a set of distinct positive integers which is multiple-closed up to $\mathrm{lcm}\,S$, and let
$f$ be an arithmetical function.
Then
\begin{equation}\label{eq:Sm}
\det[S]_f=\prod\limits_{v=1}^n
  [\zeta\ast(f_{x_v}\mu)]\left(\frac{\mathrm{lcm}\,S}{x_v}\right).
\end{equation}
\end{corollary}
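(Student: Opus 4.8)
The plan is to obtain this as the number-theoretic specialization of Theorem \ref{th:detSupper}. First I would observe that in the lattice $(\Zset^+,|)$ the join of the elements of $S$ is precisely $\vee S=\mathrm{lcm}\,S$, so that the hypothesis ``$S$ is multiple-closed up to $\mathrm{lcm}\,S$'' of Definition \ref{def1} is exactly the hypothesis ``$S$ is upper-closed up to $\vee S$'' required in Theorem \ref{th:detSupper}. Consequently Theorem \ref{th:detSupper} applies verbatim and yields
\[
\det[S]_f=\prod_{v=1}^n\Psi_{S,f}(x_v).
\]

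Next I would evaluate each factor $\Psi_{S,f}(x_v)$ using the explicit form of $\Psi$ for multiple-closed sets recorded in \eqref{eq:psimultiple}. Since $S$ is itself multiple-closed up to $\mathrm{lcm}\,S$, we may take $D=S$ there, so that $\mu_S(d_k,d_v)=\mu(d_v/d_k)$ and
\[
\Psi_{S,f}(x_v)=\sum_{a\,\mid\,\frac{\mathrm{lcm}\,S}{x_v}}(f_{x_v}\mu)(a)=[\zeta\ast(f_{x_v}\mu)]\left(\frac{\mathrm{lcm}\,S}{x_v}\right).
\]
Substituting this expression into the product above immediately produces the asserted formula \eqref{eq:Sm}.

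There is essentially no analytic obstacle here: once the two ingredients are in hand the corollary follows by a single substitution. The only point requiring care is the matching of hypotheses, namely that the closure condition of Definition \ref{def1} genuinely guarantees that $S$ is upper-closed up to $\vee S$ in the poset sense (so that Theorem \ref{th:detSupper} may legitimately be invoked) and, simultaneously, that it is multiple-closed up to $\mathrm{lcm}\,S$ (so that the reduction $\mu_S(d_k,d_v)=\mu(d_v/d_k)$ behind \eqref{eq:psimultiple} is valid). Both conditions hold under the stated hypothesis, and no separate join-closedness argument of the type used in Corollary \ref{co:HS} is needed, since here the inner sum over intermediate elements $z$ collapses to the single term $z=x_v$.
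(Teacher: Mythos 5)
Your derivation is correct and matches the intended route: the paper states Corollary \ref{co:Sm} without proof as the specialization of Theorem \ref{th:detSupper} to $(\Zset^+,|)$, where upper-closedness up to $\vee S$ becomes multiple-closedness up to $\mathrm{lcm}\,S$, with each factor $\Psi_{S,f}(x_v)$ evaluated via \eqref{eq:psimultiple}. Your attention to the matching of hypotheses is exactly the right point of care, and nothing is missing.
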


\setcounter{equation}{0}
\section{Inverse formulas}\label{sec:inv}

In this section we derive formulas for inverses of join matrices.
In Theorem \ref{th:invXY} we present an expression for
the inverse of $[X,Y]_{f}$ on arbitrary sets $X$ and $Y$,
and in Theorem \ref{th:invS} we present an expression for
the inverse of $[S]_{f}$ on arbitrary set $S$.
Taking $(P,\preceq)=(\Zset^+, |)$ we obtain a formula
for the inverse of LCM matrices on arbitrary set $S$.
Such formulas for the inverse of join or LCM matrices on an arbitrary set
have not previously been presented in the literature.
In Theorem \ref{th:invSjoin} we calculate the inverse of $[S]_{f}$ on join-closed set $S$ and in Theorem \ref{th:invSupper} we cover the case in which $S$ is upper-closed up to $\vee S$.

\begin{theorem}\label{th:invXY}
Let $X_i=X\setminus\{x_i\}$ and $Y_i=Y\setminus\{y_i\}$ for
$i=1, 2,\ldots, n$.
If $[X,Y]_{f}$ is invertible, then
the inverse of $[X,Y]_{f}$ is the $n\times n$ matrix $B=(b_{ij})$, where
\begin{eqnarray}
b_{ij}
&=&{(-1)^{i+j}\over \det[X,Y]_{f}}
\sum\limits_{1\leq k_{1}<k_{2}<\cdots <k_{n-1}\leq m}
   \det E(X_j)_{(k_{1},k_{2},\ldots ,k_{n-1})}
   \det E(Y_i)_{(k_{1},k_{2},\ldots ,k_{n-1})}\nonumber\\
&&\hspace*{52mm}\times \Psi_{D,f}(d_{k_{1}})\Psi_{D,f}(d_{k_{2}})\cdots
   \Psi_{D,f}(d_{k_{n-1}}).
\end{eqnarray}
\end{theorem}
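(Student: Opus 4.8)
The plan is to express the inverse entrywise via the classical cofactor (adjugate) formula $b_{ij} = (-1)^{i+j}\det[X,Y]_{f}^{(j,i)}/\det[X,Y]_{f}$, where $[X,Y]_{f}^{(j,i)}$ is the matrix obtained from $[X,Y]_{f}$ by deleting its $j$th row and $i$th column. The key observation is that deleting the $j$th row of $[X,Y]_{f}$ corresponds precisely to removing $x_j$ from $X$, and deleting the $i$th column corresponds to removing $y_i$ from $Y$; this is exactly why the index bookkeeping produces $E(X_j)$ paired with the cofactor's column index $j$ and $E(Y_i)$ paired with the row index $i$.

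First I would recall the factorization $[X,Y]_{f}=E(X)\Lambda_{D,f}E(Y)^{T}$ from Theorem~\ref{th:fac} and note that the $(n-1)\times(n-1)$ minor of $[X,Y]_{f}$ obtained by deleting row $j$ and column $i$ is itself the join matrix $[X_j,Y_i]_{f}$, whose factorization is $[X_j,Y_i]_{f}=E(X_j)\Lambda_{D,f}E(Y_i)^{T}$. Here $E(X_j)$ is the $(n-1)\times m$ matrix obtained from $E(X)$ by striking out its $j$th row, and $E(Y_i)$ is obtained from $E(Y)$ by striking out its $i$th row; crucially the diagonal matrix $\Lambda_{D,f}$ is unchanged because $D$ still contains all the relevant joins $x_a\vee y_b$ for the reduced sets. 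Applying Theorem~\ref{th:CB}(ii) to this $(n-1)$-dimensional join matrix yields
\begin{equation*}
\det[X_j,Y_i]_{f}=\sum_{1\leq k_{1}<\cdots<k_{n-1}\leq m}\det E(X_j)_{(k_{1},\ldots,k_{n-1})}\det E(Y_i)_{(k_{1},\ldots,k_{n-1})}\prod_{r=1}^{n-1}\Psi_{D,f}(d_{k_{r}}).
\end{equation*}
Substituting this into the adjugate formula and carrying the sign $(-1)^{i+j}$ gives exactly the claimed expression for $b_{ij}$.

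The main obstacle to watch is the sign and index alignment in the cofactor formula. The cofactor $C_{ji}$ of the $(j,i)$ entry carries the sign $(-1)^{j+i}$, and since $[X,Y]_{f}^{-1}=(\det[X,Y]_{f})^{-1}\operatorname{adj}[X,Y]_{f}$ with $(\operatorname{adj}M)_{ij}=C_{ji}$, one must verify that deleting row $j$ matches $X_j$ and deleting column $i$ matches $Y_i$, which is the reason $E(X_j)$ and $E(Y_i)$ (rather than the reverse) appear together. Since $(-1)^{i+j}=(-1)^{j+i}$ the sign poses no difficulty, so the only real care needed is confirming that the minor $[X,Y]_{f}^{(j,i)}$ genuinely equals the join matrix $[X_j,Y_i]_{f}$ on the two reduced sets, after which Theorem~\ref{th:CB}(ii) does all the computational work.
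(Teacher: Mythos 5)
Your proposal is correct and follows essentially the same route as the paper: express $b_{ij}$ via the cofactor of the $(j,i)$ entry, identify the corresponding minor as the join matrix $[X_j,Y_i]_f$ on the reduced sets (with the same $D$, hence the same $\Lambda_{D,f}$), and apply the Cauchy--Binet expansion of Theorem~\ref{th:CB} to that $(n-1)\times(n-1)$ join matrix. Your added care about the row/column--versus--$X_j$/$Y_i$ correspondence and the sign is exactly the step the paper dismisses as ``easy to see.''
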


\begin{proof}
It is well known that
\begin{equation}
b_{ij}={\alpha_{ji}\over \det[X,Y]_{f}},
\end{equation}
where $\alpha_{ji}$ is the cofactor of the $ji$-entry of $[X,Y]_{f}$.
It is easy to see that
$\alpha_{ji}=(-1)^{i+j}\det[X_{j},Y_{i}]_{f}$.
By Theorem \ref{th:CB} we see that
\begin{eqnarray}
\det[X_{j},Y_{i}]_{f}
&=&\sum\limits_{1\leq k_{1}<k_{2}<\cdots <k_{n-1}\leq m}
   \det E(X_j)_{(k_{1},k_{2},\ldots ,k_{n-1})}
   \det E(Y_i)_{(k_{1},k_{2},\ldots ,k_{n-1})}\nonumber\\
&&\hspace*{32mm}\times \Psi_{D,f}(d_{k_{1}})\Psi_{D,f}(d_{k_{2}})\cdots
   \Psi_{D,f}(d_{k_{n-1}}).
\end{eqnarray}
Combining the above equations we obtain Theorem \ref{th:invXY}.
\end{proof}

\begin{theorem}\label{th:invS}
Let $S_i=S\setminus\{x_i\}$ for $i=1, 2,\ldots, n$.
If $[S]_{f}$ is invertible, then
the inverse of $[S]_{f}$ is the $n\times n$ matrix $B=(b_{ij})$, where
\begin{eqnarray}
b_{ij}
&=&{(-1)^{i+j}\over \det[S]_{f}}
\sum\limits_{1\leq k_{1}<k_{2}<\cdots <k_{n-1}\leq m}
   \det E(S_j)_{(k_{1},k_{2},\ldots ,k_{n-1})}
   \det E(S_i)_{(k_{1},k_{2},\ldots ,k_{n-1})}\nonumber\\
&&\hspace*{47mm}\times \Psi_{D,f}(d_{k_{1}})\Psi_{D,f}(d_{k_{2}})\cdots
   \Psi_{D,f}(d_{k_{n-1}}).
\end{eqnarray}
\end{theorem}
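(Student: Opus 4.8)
The plan is to obtain Theorem \ref{th:invS} as the diagonal special case $X = Y = S$ of Theorem \ref{th:invXY}, which has already been established. First I would substitute $X = Y = S = \{x_1, \ldots, x_n\}$ into the statement of Theorem \ref{th:invXY}. Under this identification the join matrix $[X,Y]_f$ becomes $[S,S]_f = [S]_f$, the deleted row-set $X_j = X \setminus \{x_j\}$ becomes $S_j = S \setminus \{x_j\}$, and since $y_i = x_i$ when $Y = S$, the deleted column-set $Y_i = Y \setminus \{y_i\}$ becomes $S_i = S \setminus \{x_i\}$. Substituting these into the expression for $b_{ij}$ furnished by Theorem \ref{th:invXY} then yields precisely the claimed formula, with $E(X_j)$ and $E(Y_i)$ replaced throughout by $E(S_j)$ and $E(S_i)$.

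Alternatively, I could argue directly, mimicking the proof of Theorem \ref{th:invXY}. I would write $b_{ij} = \alpha_{ji}/\det[S]_f$, where $\alpha_{ji}$ is the cofactor of the $ji$-entry of $[S]_f$, observe that $\alpha_{ji} = (-1)^{i+j}\det[S_j, S_i]_f$ because deleting row $j$ and column $i$ from $[S]_f$ leaves exactly the join matrix on the sets $S_j$ and $S_i$, and then apply the Cauchy-Binet expansion of Theorem \ref{th:CB} to the $(n-1) \times (n-1)$ determinant $\det[S_j, S_i]_f$. Since the structure theorem (Theorem \ref{th:fac}) factors $[S_j, S_i]_f = E(S_j)\Lambda_{D,f}E(S_i)^T$ over the same common set $D$, the same $(n-1)$-fold minor sum emerges.

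Because the result is an immediate specialization, I do not expect any genuine obstacle. The only point demanding care is the index bookkeeping: I must check that deleting the $j$-th row corresponds to removing $x_j$ from the row-set while deleting the $i$-th column corresponds to removing $x_i$ (equivalently $y_i$) from the column-set, so that the sign $(-1)^{i+j}$ and the asymmetric roles of $S_i$ and $S_j$ match those inherited from the two-set formula.
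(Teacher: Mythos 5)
Your primary argument---substituting $X=Y=S$ into Theorem \ref{th:invXY} so that $E(X_j)$ and $E(Y_i)$ become $E(S_j)$ and $E(S_i)$---is exactly the paper's proof, which consists of the single sentence ``Taking $X=Y=S$ in Theorem \ref{th:invXY} we obtain Theorem \ref{th:invS}.'' Your proposal is correct, and the alternative direct cofactor argument you sketch is just the proof of Theorem \ref{th:invXY} replayed in the special case, so nothing further is needed.
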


\begin{proof}
Taking $X=Y=S$ in Theorem \ref{th:invXY} we obtain
Theorem \ref{th:invS}.
\end{proof}

\begin{theorem}\label{th:invSjoin}
Suppose that $S$ is join-closed.
If $[S]_f$ is
invertible, then the inverse of $[S]_{f}$ is the $n\times n$ matrix
$B=(b_{ij})$, where
\begin{equation}
b_{ij}=\sum_{k=1}^{n}{(-1)^{i+j}\over\Psi_{S,f}(x_k)}
\det E(S_{i}^{k})\det E(S_{j}^{k}),
\end{equation}
where
$E(S_{i}^{k})$ is the $(n-1)\times (n-1)$ submatrix of $E(S)$ obtained by
deleting the $i$th row and the $k$th column of $E(S)$, or
\begin{equation}\label{eq:invSjoin}
b_{ij}
=
\sum_{x_k\preceq x_i\wedge x_j}
{\mu_S(x_k, x_i)\mu_S(x_k, x_j)\over \Psi_{S, f}(x_k)},
\end{equation}
where $\mu_S$ is the M\"{o}bius function of the poset $(S,\preceq)$.
\end{theorem}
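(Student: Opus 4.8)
The plan is to specialize the structure theorem (Theorem \ref{th:fac}) to $X=Y=S$ and, exploiting that $S$ is join-closed, to take $D=S$ so that the factor matrices become square. First I would record that with this choice $E(S)$ is the $n\times n$ matrix whose $(i,j)$ entry equals $1$ exactly when $x_i\preceq x_j$; because the elements are ordered so that $x_i\preceq x_j\Rightarrow i\le j$, this matrix is upper triangular with every diagonal entry equal to $1$. Hence $\det E(S)=1$ and $E(S)$ is invertible. The factorization then reads $[S]_f=E(S)\Lambda_{S,f}E(S)^T$, and since $[S]_f$ is assumed invertible while $E(S)$ is invertible, the diagonal matrix $\Lambda_{S,f}$ must be invertible as well; equivalently $\Psi_{S,f}(x_k)\neq 0$ for every $k$ (this is consistent with Theorem \ref{th:detSjoin}, which already gives $\det[S]_f=\prod_v\Psi_{S,f}(x_v)$).

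Inverting the factorization yields $[S]_f^{-1}=\bigl(E(S)^{-1}\bigr)^T\Lambda_{S,f}^{-1}E(S)^{-1}$, so that
\[
b_{ij}=\sum_{k=1}^n \bigl(E(S)^{-1}\bigr)_{ki}\,\frac{1}{\Psi_{S,f}(x_k)}\,\bigl(E(S)^{-1}\bigr)_{kj}.
\]
To obtain the first expression I would insert the adjugate formula for $E(S)^{-1}$: since $\det E(S)=1$, the $(k,i)$ entry of $E(S)^{-1}$ is the cofactor $(-1)^{k+i}\det E(S_i^k)$, where $E(S_i^k)$ is $E(S)$ with row $i$ and column $k$ removed. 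Substituting the two cofactors and collecting the signs, $(-1)^{k+i}(-1)^{k+j}=(-1)^{i+j}$, produces exactly $b_{ij}=\sum_k (-1)^{i+j}\Psi_{S,f}(x_k)^{-1}\det E(S_i^k)\det E(S_j^k)$.

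For the second expression I would appeal to the standard incidence-algebra fact that the zeta matrix and the M\"obius matrix of a locally finite poset are mutual inverses. Since $E(S)$ is precisely the zeta matrix of $(S,\preceq)$, its inverse has $(i,j)$ entry $\mu_S(x_i,x_j)$. Thus $\bigl(E(S)^{-1}\bigr)_{ki}=\mu_S(x_k,x_i)$ and $\bigl(E(S)^{-1}\bigr)_{kj}=\mu_S(x_k,x_j)$, whence $b_{ij}=\sum_{k}\mu_S(x_k,x_i)\mu_S(x_k,x_j)/\Psi_{S,f}(x_k)$. Finally, $\mu_S(x_k,x_i)$ vanishes unless $x_k\preceq x_i$ and $\mu_S(x_k,x_j)$ vanishes unless $x_k\preceq x_j$, and $x_k\preceq x_i$ together with $x_k\preceq x_j$ is equivalent to $x_k\preceq x_i\wedge x_j$; discarding the zero terms collapses the sum to the stated range.

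The conceptual content is entirely carried by the two observations $\det E(S)=1$ and $E(S)^{-1}=(\mu_S(x_i,x_j))$; the only place demanding genuine care is the index and sign bookkeeping in passing from $\bigl(E(S)^{-1}\bigr)^T\Lambda_{S,f}^{-1}E(S)^{-1}$ to the two closed forms—in particular keeping straight which index of $\mu_S$ comes first after transposition and verifying that the two cofactor signs combine to the single factor $(-1)^{i+j}$. I expect no real obstacle beyond this careful accounting.
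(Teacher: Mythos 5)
Your proof is correct, but it takes a different route from the paper's main argument --- in fact it is precisely the alternative route that the authors sketch in the Remark immediately following Theorem \ref{th:invSjoin}. The paper's own proof instead specializes the general inverse formula of Theorem \ref{th:invS} (which was obtained from cofactors of $[X,Y]_f$ together with the Cauchy--Binet expansion of $\det[X_j,Y_i]_f$) to $D=S$, and then converts the resulting cofactor expression into the M\"obius form via the identity $(-1)^{i+j}\det E(S_i^j)=\mu_S(x_i,x_j)$. You instead invert the factorization $[S]_f=E(S)\Lambda_{S,f}E(S)^T$ of Theorem \ref{th:fac} directly, using that $E(S)$ is unitriangular, and then read off both closed forms from the two descriptions of $E(S)^{-1}$ (adjugate entries versus M\"obius matrix). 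The two proofs ultimately rest on the same two facts --- $\det E(S)=1$ and $E(S)^{-1}=(\mu_S(x_i,x_j))$ --- but your version is more self-contained (it does not route through the Cauchy--Binet machinery of Section 5) and it yields as a free byproduct the observation that invertibility of $[S]_f$ forces $\Psi_{S,f}(x_k)\neq 0$ for all $k$, which in the paper's arrangement is instead supplied by the determinant formula of Theorem \ref{th:detSjoin}. The index and sign bookkeeping you flag is handled correctly: $(E(S)^{-1})_{ki}=(-1)^{i+k}\det E(S_i^k)$, and the two signs combine to $(-1)^{i+j}$ as claimed.
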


\begin{proof}
Since $S$ is join-closed, we may take $D=S$.
Then $E(S)$ is a square matrix with $\det E(S)=1$.
Further, $E(S)$ is the
matrix associated with the zeta function of
the finite poset $(S,\preceq)$.
Thus the inverse of $E(S)$ is the
matrix associated with the M\"{o}bius function of
$(S,\preceq)$, that is,
if $U=(u_{ij})$ is the inverse of $E(S)$, then
$u_{ij}=\mu_S(x_i, x_j)$, see \cite[p. 139]{Aig}.
On the other hand, $u_{ij}=\beta_{ij}/\det E(S)=\beta_{ij}$,
where $\beta_{ij}$ is the cofactor of the $ij$-entry of $E(S)$.
Here
$\beta_{ij}=(-1)^{i+j}\det E(S_i^j)$.
Thus
\begin{equation}\label{eq:aid}
(-1)^{i+j}\det E(S_i^j)=\mu_S(x_i, x_j).
\end{equation}
Now we apply Theorem \ref{th:invS} with $D=S$.
Then $m=n$, and using formulas $(\ref{eq:detSjoin})$ and
$(\ref{eq:aid})$ we obtain Theorem \ref{th:invSjoin}.
\end{proof}

\begin{remark}\upshape
Equation (\ref{eq:invSjoin}) can also be
proved  by taking $X=Y=S$ and $D=S$ in Theorem \ref{th:fac}
and then applying the formula
$[S]_{f}^{-1}=(E(S)^{T})^{-1}\Lambda_{S,f}^{-1}E(S)^{-1}$.
\end{remark}

\begin{example}\label{ex3}
Let $(P,\preceq)$, $f$ and $S$ be as in Example \ref{ex1}. If $t\neq -x_n$, then the matrix $[S]_f$ is invertible and the inverse of $[S]_f$ is the $n\times n$ tridiagonal matrix
$B=(b_{ij})$, where
\[
b_{ij} = \left\{ \begin{array}{ll}
0 & \textrm{if $|i-j|>1$},\\
\frac{1}{x_1-x_2} & \textrm{if $i=j=1$},\\
\frac{1}{x_{i-1}-x_{i}}+\frac{1}{x_i-x_{i+1}} & \textrm{if $1<i=j<n$},\\
\frac{1}{x_{n-1}-x_{n}}+\frac{1}{x_n+t} & \textrm{if $i=j=n$},\\
\frac{1}{|x_i-x_j|} & \textrm{if $|i-j|=1$}.
\end{array} \right.
\]
\end{example}

\begin{theorem}\label{th:invSupper}
Suppose that $S$ is upper-closed up to $\vee S$.
If $[S]_f$ is
invertible, then the inverse of $[S]_{f}$ is the $n\times n$ matrix
$B=(b_{ij})$ with
\begin{equation}\label{eq:invSupper}
b_{ij}
=\sum_{x_k\preceq x_i\wedge x_j}
{\mu(x_k, x_i)\mu(x_k, x_j)\over \Psi_{S, f}(x_k)},
\end{equation}
where $\mu$ is the M\"{o}bius function of $(P,\preceq)$.
\end{theorem}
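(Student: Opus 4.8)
The plan is to derive this result directly from the join-closed case treated in Theorem~\ref{th:invSjoin}, showing that the only change is a replacement of the subposet Möbius function $\mu_S$ by the ambient Möbius function $\mu$ of $(P,\preceq)$.

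First I would verify that a set which is upper-closed up to $\vee S$ is automatically join-closed, so that Theorem~\ref{th:invSjoin} applies. Indeed, if $x_i,x_j\in S$, then $x_i\preceq x_i\vee x_j\preceq\vee S$, and since $x_i\in S$, Definition~\ref{def2} forces $x_i\vee x_j\in S$. Hence $S$ is join-closed and formula \eqref{eq:invSjoin} already gives
\[
b_{ij}=\sum_{x_k\preceq x_i\wedge x_j}\frac{\mu_S(x_k,x_i)\,\mu_S(x_k,x_j)}{\Psi_{S,f}(x_k)},
\]
where each $x_k$ occurring in the sum lies in $S$. This is exactly the target formula \eqref{eq:invSupper} except that $\mu_S$ appears in place of $\mu$, so it remains only to reconcile the two Möbius functions.

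The heart of the argument is then to show that $\mu_S(x_k,x_i)=\mu(x_k,x_i)$ for every pair appearing in the sum. Fix $x_k,x_i\in S$ with $x_k\preceq x_i$, and let $z$ be any element of $P$ with $x_k\preceq z\preceq x_i$. Because $x_i\in S$ we have $z\preceq x_i\preceq\vee S$, and since $x_k\in S$ with $x_k\preceq z$, the upper-closed-up-to property of Definition~\ref{def2} yields $z\in S$. Thus the interval $[x_k,x_i]$ computed in $(P,\preceq)$ is entirely contained in $S$, so it coincides with the corresponding interval in the subposet $(S,\preceq)$. As the Möbius function of a locally finite poset depends only on the interval $[x_k,x_i]$, we conclude $\mu_S(x_k,x_i)=\mu(x_k,x_i)$, and symmetrically $\mu_S(x_k,x_j)=\mu(x_k,x_j)$.

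Substituting these equalities into the displayed formula for $b_{ij}$ produces exactly \eqref{eq:invSupper}, completing the proof. I do not anticipate a genuine obstacle here; the one point requiring care is the interval-containment step, and in particular checking that the whole $P$-interval $[x_k,x_i]$—not merely its endpoints—lies in $S$, which is precisely what the hypothesis of Definition~\ref{def2} supplies. The observation that upper-closedness up to $\vee S$ implies join-closedness is what allows us to borrow the entire cofactor machinery of Theorem~\ref{th:invSjoin} rather than redo that computation from scratch.
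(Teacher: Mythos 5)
Your proposal is correct and follows essentially the same route as the paper: the paper's proof likewise deduces the result from Theorem~\ref{th:invSjoin} by noting that $\mu_S=\mu$ on $S$ when $S$ is upper-closed up to $\vee S$ (citing Aigner's Proposition 4.6 for that fact, which you instead prove directly via the interval-containment argument). Your explicit check that upper-closedness up to $\vee S$ implies join-closedness is a detail the paper leaves implicit, but it is the same argument.
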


\begin{proof}
Since $S$ is upper-closed up to $\vee S$, we have $\mu_S=\mu$ on $S$,
(apply \cite[Proposition 4.6]{Aig}).  Thus Theorem \ref{th:invSupper} follows from
Theorem \ref{th:invSjoin}.
\end{proof}

\begin{example}\label{ex4}
Let $(P,\preceq)$, $f$ and $S$ be as in Example \ref{ex2}. If $t\neq -x_n$, then the matrix $[S]_f$ is invertible and the inverse of $[S]_f$ is the $n\times n$ tridiagonal matrix
$B=(b_{ij})$, where
\[
b_{ij} = \left\{ \begin{array}{ll}
0 & \textrm{if $|i-j|>1$},\\
-1 & \textrm{if $i=j=1$},\\
-2 & \textrm{if $1<i=j<n$},\\
-1+\frac{1}{x_n+t} & \textrm{if $i=j=n$},\\
1 & \textrm{if $|i-j|=1$}.
\end{array} \right.
\]
\end{example}

\begin{corollary}\label{co:BL2}
Let $S$ be a set of distinct
positive integers which is multiple-closed up to $\mathrm{lcm}\,S$, and let
$f$ be an arithmetical function.
If the LCM matrix $[S]_f$ is invertible,
then its inverse is the $n\times n$ matrix
$B=(b_{ij})$, where
\begin{equation}\label{eq:BL2}
b_{ij}
=\sum\limits_{ x_k \mid (x_i, x_j) }
{\mu(x_i/x_k)\mu(x_j/x_k)\over [\zeta\ast(f_{x_k}\mu)]\left(\frac{\mathrm{lcm}\,S}{x_k}\right)}.
\end{equation}
Here $\mu$ is the number-theoretic M\"{o}bius function.
\end{corollary}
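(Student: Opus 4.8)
The plan is to obtain Corollary \ref{co:BL2} as the number-theoretic specialization of Theorem \ref{th:invSupper} to the lattice $(P,\preceq)=(\Zset^+,|)$. First I would verify that the hypothesis transfers correctly: in this lattice the join $\vee$ is the least common multiple and $\preceq$ is divisibility, so Definition \ref{def2} reduces to Definition \ref{def1}, and a set $S$ that is multiple-closed up to $\mathrm{lcm}\,S$ is precisely an upper-closed set up to $\vee S$. Hence Theorem \ref{th:invSupper} applies and gives that the $ij$ entry of $[S]_f^{-1}$ equals $\sum_{x_k\preceq x_i\wedge x_j}\mu(x_k,x_i)\mu(x_k,x_j)/\Psi_{S,f}(x_k)$, where $\mu$ is the M\"{o}bius function of $(\Zset^+,|)$.

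Next I would translate each factor into number-theoretic language. The meet $x_i\wedge x_j$ is the greatest common divisor $(x_i,x_j)$, so the index condition $x_k\preceq x_i\wedge x_j$ becomes $x_k\mid(x_i,x_j)$. For the M\"{o}bius factors I would use the standard fact that on any interval $[a,b]$ of $(\Zset^+,|)$ (with $a\mid b$) the lattice M\"{o}bius function satisfies $\mu(a,b)=\mu(b/a)$, where the right-hand side is the classical number-theoretic M\"{o}bius function; this is exactly the identity $\mu_D(d_k,d_v)=\mu(d_v/d_k)$ recorded in the Preliminaries. Thus $\mu(x_k,x_i)=\mu(x_i/x_k)$ and $\mu(x_k,x_j)=\mu(x_j/x_k)$.

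Finally I would rewrite the diagonal quantity $\Psi_{S,f}(x_k)$ using formula \eqref{eq:psimultiple}. Since $S$ is multiple-closed up to $\mathrm{lcm}\,S$, we may take $D=S$ there and obtain $\Psi_{S,f}(x_k)=[\zeta\ast(f_{x_k}\mu)](\mathrm{lcm}\,S/x_k)$. Substituting the three translations into the formula from Theorem \ref{th:invSupper} yields \eqref{eq:BL2} verbatim.

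As for obstacles, the argument is essentially a dictionary substitution and presents no real difficulty once the preliminary identities are in hand. The only point requiring a moment of care is the reconciliation of the two M\"{o}bius functions: Theorem \ref{th:invSupper} uses $\mu$ of the ambient lattice $(\Zset^+,|)$, whereas \eqref{eq:psimultiple} was derived for the M\"{o}bius function of the finite sublattice $D$. One must confirm that these agree on the relevant intervals, which holds because $S$ being multiple-closed up to $\mathrm{lcm}\,S$ forces $\mu_S=\mu$ on $S$ (the same interval-restriction argument invoked in the proof of Theorem \ref{th:invSupper}), so no inconsistency arises.
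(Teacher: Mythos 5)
Your proposal is correct and matches the paper's intended derivation: the corollary is stated without proof immediately after Theorem \ref{th:invSupper} precisely because it is the specialization to $(\Zset^+,|)$ that you carry out, using $x_i\wedge x_j=(x_i,x_j)$, the identity $\mu_D(d_k,d_v)=\mu(d_v/d_k)$ for multiple-closed sets, and formula \eqref{eq:psimultiple} for $\Psi_{S,f}(x_k)$. Your attention to reconciling the lattice M\"obius function with the number-theoretic one is exactly the right point of care, and the argument is complete.
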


\setcounter{equation}{0}
\section{Formulas for meet matrices}\label{sec:det}

Let $f$ be a complex-valued function on $P$.
We say that $f$ is a semimultiplicative function if
\begin{equation}\label{eq:semi}
f(x)f(y)=f(x\land y)f(x\lor y)
\end{equation}
for all $x,y\in P$ (see \cite{KH2}).

The notion of a semimultiplicative function arises from
the theory of arithmetical functions.
Namely, an arithmetical function $f$ is said to be semimultiplicative
if $f(r)f(s)=f((r, s))f([r, s])$ for all $r, s\in\Zset^+$.
For  semimultiplicative arithmetical functions
reference is made to the book by Sivaramakrishnan \cite{Si}, see also
\cite{HS}. Note that a semimultiplicative arithmetical function $f$
with $f(1)\neq 0$ is referred to as
a quasimultiplicative arithmetical function.
Quasimultiplicative arithmetical functions with $f(1)=1$ are the usual
multiplicative arithmetical functions.

In this section we show that meet matrices $(X,Y)_{f}$ with
respect to semimultiplicative functions $f$ possess properties similar
to those given for join matrices $[X,Y]_{f}$ with
respect to arbitrary functions $f$
in Sections 3, 4 and 5. Since there already are several formulas for the determinant and the inverse of the matrix $(X,Y)_{f}$ (see \cite{ATH} and \cite{KH4}), the motivation in deriving new formulas probably needs clarification. The formulas of this section are especially useful when considering the matrix $(S)_f$, where the set $S$ is either join-closed or upper closed up to $\vee S$. That is, because in this case the formulas of this section result in shorter and simplier calculations.
{\sl Throughout this section $f$ is a semimultiplicative function on $P$ such
that $f(x)\neq 0$ for all $x\in P$.}

\begin{theorem}\label{jth:fac}
\begin{equation}\label{jeq:fac}
(X,Y)_{f}=\Delta_{X,f}[X, Y]_{1/f}\Delta_{Y,f}
\end{equation}
or
\begin{equation}\label{jeq:fac2}
(X,Y)_{f}=\Delta_{X,f}E(X)\Lambda_{D,1/f}E(Y)^{T}\Delta_{Y,f},
\end{equation}
where
\begin{equation}\label{eq:deltax}
\Delta_{X,f}=\mathrm{diag}(f(x_{1}), f(x_{2}),\ldots , f(x_{n}))
\end{equation}
and
\begin{equation}\label{eq:deltay}
\Delta_{Y,f}=\mathrm{diag}(f(y_{1}), f(y_{2}),\ldots , f(y_{n})).
\end{equation}
\end{theorem}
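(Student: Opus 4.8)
The plan is to establish the first factorization \eqref{jeq:fac} by a direct entrywise comparison of the two sides, and then to obtain \eqref{jeq:fac2} immediately by substituting the structure theorem of Section \ref{sec:str}. First I would compute the $ij$ entry of the right-hand side of \eqref{jeq:fac}. Since $\Delta_{X,f}$ and $\Delta_{Y,f}$ are diagonal with $ii$ entry $f(x_i)$ and $jj$ entry $f(y_j)$ respectively, and since the $ij$ entry of $[X,Y]_{1/f}$ is $1/f(x_i\vee y_j)$ (a well-defined complex number because $f(x)\neq 0$ for all $x\in P$), the product $\Delta_{X,f}[X,Y]_{1/f}\Delta_{Y,f}$ has $ij$ entry
\[
f(x_i)\cdot\frac{1}{f(x_i\vee y_j)}\cdot f(y_j)=\frac{f(x_i)f(y_j)}{f(x_i\vee y_j)}.
\]

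The key step is then to invoke the semimultiplicativity \eqref{eq:semi} of $f$, applied to the pair $x_i,y_j$, which gives $f(x_i)f(y_j)=f(x_i\wedge y_j)f(x_i\vee y_j)$. Dividing by $f(x_i\vee y_j)$, which is nonzero by hypothesis, the displayed entry collapses to $f(x_i\wedge y_j)$, and this is precisely the $ij$ entry of $(X,Y)_f$. Since $i$ and $j$ were arbitrary, this proves \eqref{jeq:fac}. To pass to \eqref{jeq:fac2}, I would apply Theorem \ref{th:fac} to the function $1/f$ in place of $f$, obtaining $[X,Y]_{1/f}=E(X)\Lambda_{D,1/f}E(Y)^{T}$, and substitute this expression into \eqref{jeq:fac}.

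There is no serious obstacle here: the entire argument rests on the single scalar identity above, and the matrix manipulation is routine. The only points that genuinely require the standing hypotheses of this section are that $1/f$ be a legitimate complex-valued function on $P$, which is exactly what $f(x)\neq 0$ for all $x\in P$ guarantees, so that both $[X,Y]_{1/f}$ and the diagonal matrix $\Lambda_{D,1/f}$ are defined and Theorem \ref{th:fac} is applicable to $1/f$, and that the cancellation of $f(x_i\vee y_j)$ is valid. Semimultiplicativity is what converts the quotient of $f$-values into the meet value, so both standing assumptions of the section are used in an essential way.
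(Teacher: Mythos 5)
Your argument is correct and is essentially the paper's own proof: both rewrite the $ij$ entry via semimultiplicativity as $f(x_i\wedge y_j)=f(x_i)\,\frac{1}{f(x_i\vee y_j)}\,f(y_j)$ to get \eqref{jeq:fac}, and then apply Theorem \ref{th:fac} to the function $1/f$ to get \eqref{jeq:fac2}. Your write-up just spells out the entrywise matrix multiplication and the role of the hypothesis $f(x)\neq 0$ more explicitly than the paper does.
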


\begin{proof}
By (\ref{eq:semi}) the $ij$-entry of $(X,Y)_{f}$ is
\begin{equation}
f(x_{i}\wedge y_{j})
=f(x_{i}) {1\over f(x_{i}\vee y_{j})} f(y_{j}).
\end{equation}
We thus obtain (\ref{jeq:fac}), and applying Theorem \ref{th:fac} we
obtain (\ref{jeq:fac2}).
\end{proof}

From (\ref{jeq:fac}) we obtain
\begin{equation}\label{jeq:dett}
\det(X,Y)_{f}=\left(\prod_{v=1}^n f(x_v)f(y_v)\right)\det[X,Y]_{1/f}
\end{equation}
and
\begin{equation}\label{jeq:invv}
(X,Y)_{f}^{-1}=\Delta_{Y,f}^{-1}[X, Y]_{1/f}^{-1}\Delta_{X,f}^{-1}.
\end{equation}
Now, using (\ref{jeq:dett}), (\ref{jeq:invv}) and the formulas of Sections 4
and 5  we obtain formulas for meet matrices.

We first present formulas for the determinant of meet matrices.
In Theorem \ref{jth:CB}
we give a formula for $\det(X,Y)_{f}$ on arbitrary sets $X$ and $Y$.
This is an alternative expression that given in \cite{ATH}. 
In Theorems \ref{jth:detSjoin} and \ref{jth:detSupper} respectively,
we calculate $\det(S)_{f}$ when $S$ is join-closed and upper-closed up to $\vee S$.

\begin{theorem}\label{jth:CB}
{\rm (i)} If $n>m$, then $\det(X,Y)_{f}=0$.

\noindent
{\rm (ii)} If $n\leq m$, then
\begin{eqnarray}
\det(X,Y)_{f}
&=&\left(\prod_{v=1}^n f(x_v)f(y_v)\right)
   \Biggl(\sum\limits_{1\leq k_{1}<k_{2}<\cdots <k_{n}\leq m}
   \det E(X)_{(k_{1},k_{2},\ldots ,k_{n})}
   \det E(Y)_{(k_{1},k_{2},\ldots ,k_{n})}\nonumber\\
&&\times \Psi_{D,1/f}(d_{k_{1}})\Psi_{D,1/f}(d_{k_{2}})\cdots
   \Psi_{D,1/f}(d_{k_{n}})\Biggr).
\end{eqnarray}
\end{theorem}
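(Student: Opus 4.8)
The plan is to reduce the evaluation of $\det(X,Y)_f$ to the join-matrix determinant formula of Theorem~\ref{th:CB} applied to the reciprocal function $1/f$. The entry point is the determinant identity \eqref{jeq:dett}, namely $\det(X,Y)_f=\left(\prod_{v=1}^n f(x_v)f(y_v)\right)\det[X,Y]_{1/f}$, which is itself an immediate consequence of the factorization \eqref{jeq:fac} in Theorem~\ref{jth:fac} together with the semimultiplicativity of $f$. Because the standing assumption of this section guarantees $f(x)\neq 0$ for every $x\in P$, the reciprocal $1/f$ is a well-defined complex-valued function on $P$, and so Theorem~\ref{th:CB} applies verbatim with $f$ replaced by $1/f$. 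Thus the whole argument amounts to transcribing Theorem~\ref{th:CB} for $1/f$ and multiplying through by the prefactor.

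For part (i), I would suppose $n>m$ and invoke Theorem~\ref{th:CB}(i) for the function $1/f$, obtaining $\det[X,Y]_{1/f}=0$; then \eqref{jeq:dett} gives $\det(X,Y)_f=0$ at once, the scalar factor $\prod_{v=1}^n f(x_v)f(y_v)$ playing no role. For part (ii), I would suppose $n\leq m$ and apply Theorem~\ref{th:CB}(ii) for $1/f$, which writes $\det[X,Y]_{1/f}$ as the Cauchy--Binet sum over $1\leq k_1<\cdots<k_n\leq m$ of the products $\det E(X)_{(k_1,\ldots,k_n)}\det E(Y)_{(k_1,\ldots,k_n)}\,\Psi_{D,1/f}(d_{k_1})\cdots\Psi_{D,1/f}(d_{k_n})$. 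Substituting this into \eqref{jeq:dett} and extracting the common prefactor yields precisely the asserted formula.

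The one point I would flag explicitly, and the only place where care is needed, is that the incidence matrices $E(X)$ and $E(Y)$ entering Theorem~\ref{th:CB} depend solely on the order relation $\preceq$ and on the auxiliary set $D$ through \eqref{eq:eX} and \eqref{eq:eY}, and not at all on the function used to build the join matrix. Consequently they are unchanged under the passage from $f$ to $1/f$, and only the diagonal weights $\Psi_{D,f}$ are replaced by $\Psi_{D,1/f}$. There is therefore no genuine obstacle: the entire combinatorial content already resides in Theorem~\ref{th:CB}, and Theorem~\ref{jth:CB} is simply its semimultiplicative image under the identity \eqref{jeq:dett}.
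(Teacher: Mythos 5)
Your proposal is correct and follows exactly the route the paper intends: Theorem~\ref{jth:CB} is obtained by combining the identity \eqref{jeq:dett} (a consequence of the factorization \eqref{jeq:fac}) with Theorem~\ref{th:CB} applied to the function $1/f$, which is well defined since $f$ is nonvanishing by the standing assumption of the section. Your remark that $E(X)$ and $E(Y)$ are independent of the function and hence unchanged under the passage from $f$ to $1/f$ is a correct and worthwhile point of care, but the argument is otherwise the same as the paper's.
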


\begin{theorem}\label{jth:detSjoin}
If $S$ is join-closed, then
\begin{align}\label{jeq:detSjoin}
\det(S)_f&=\prod_{v=1}^n f(x_v)^2 \Psi_{S,1/f}(x_{v})=\prod_{v=1}^n f(x_v)^2\sum_{x_v\preceq x_u}{\frac{\mu_S(x_v, x_u)}{f(x_u)}}\notag\\
&=\prod_{v=1}^n f(x_v)^2
 \sum_{x_v\preceq z\atop {x_t\,\not\preceq\,z\atop v<t}}
                 \sum_{z\preceq w\preceq\vee S} {\frac{\mu(z, w)}{f(w)}}.
\end{align}
\end{theorem}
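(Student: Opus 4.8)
The plan is to derive this as a direct corollary of equation \eqref{jeq:dett} together with Theorem \ref{th:detSjoin}. First I would specialize \eqref{jeq:dett} to the case $X=Y=S$. Then $y_v=x_v$ for every $v$, so the prefactor $\prod_{v=1}^n f(x_v)f(y_v)$ collapses to $\prod_{v=1}^n f(x_v)^2$, and \eqref{jeq:dett} reads
\[
\det(S)_f=\left(\prod_{v=1}^n f(x_v)^2\right)\det[S]_{1/f}.
\]
At this point I would note that $1/f$ is a legitimate complex-valued function on $P$, which is exactly what the standing hypothesis of this section guarantees, namely that $f(x)\neq0$ for all $x\in P$; without this the matrix $[S]_{1/f}$ and the quantities $\Psi_{S,1/f}$ would be undefined.

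Next I would apply Theorem \ref{th:detSjoin} with the function $f$ there replaced by $1/f$. Since $S$ is join-closed by hypothesis, that theorem is available and furnishes the three equivalent expressions for $\det[S]_{1/f}$: the product $\prod_{v=1}^n\Psi_{S,1/f}(x_v)$, its expansion through the M\"obius function $\mu_S$ of $(S,\preceq)$, and its expansion through the M\"obius function $\mu$ of $(P_S,\preceq)$ via \eqref{eq:psijoin}. Substituting each of these into the displayed identity above and absorbing the scalar factor $\prod_{v=1}^n f(x_v)^2$ into the product yields precisely the three asserted forms of $\det(S)_f$. In the two rightmost expressions the summand $(1/f)(x_u)\,\mu_S(x_v,x_u)$ is rewritten as $\mu_S(x_v,x_u)/f(x_u)$, and likewise the innermost term $(1/f)(w)\,\mu(z,w)$ becomes $\mu(z,w)/f(w)$.

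I do not anticipate any genuine obstacle, since the statement is a mechanical combination of two earlier results. The only point that requires a moment of care is checking that the hypotheses align: Theorem \ref{th:detSjoin} is stated for an arbitrary complex-valued function on a join-closed set, so it applies to $1/f$ with no condition beyond the nonvanishing of $f$, which the section assumes throughout. Consequently the entire argument reduces to the specialization $X=Y=S$ in \eqref{jeq:dett} followed by a direct invocation of Theorem \ref{th:detSjoin} for $1/f$.
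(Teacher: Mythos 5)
Your proposal is correct and coincides with the paper's own (implicit) argument: the paper states that the results of this section follow from \eqref{jeq:dett} together with the formulas of Sections 4 and 5, which is exactly your specialization $X=Y=S$ in \eqref{jeq:dett} followed by applying Theorem \ref{th:detSjoin} to $1/f$. Your remark that the standing hypothesis $f(x)\neq 0$ is what makes $1/f$ admissible is the right point of care and matches the section's blanket assumption.
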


\begin{example}\label{ex5}
Let $(P,\preceq)=(\Zset,\leq)$, $t\in\Cset$ a complex number such that $t\neq-x_i$ for all $x_i\in S$ and $f(x_i)=x_i+t$ for all $x_i\in S$. Since $(\Zset,\leq)$ is a chain, the function $f$ is trivially semimultiplicative. Now from Theorem \ref{jth:detSjoin} we get
\begin{align*}
\det(S)_f&=\prod_{v=1}^n f(x_v)^2
 \sum_{x_v\preceq x_u}{\frac{\mu_S(x_v, x_u)}{f(x_u)}}
 =\prod_{v=1}^n f(x_v)^2\left(\frac{1}{f(x_v)}-\frac{1}{f(x_{v+1})}\right)\\
 &=\prod_{v=1}^n f(x_v)^2\frac{f(x_{v+1})-f(x_v)}{f(x_v)f(x_{v+1})}\\
 &=f(x_1)(f(x_2)-f(x_1))(f(x_3)-f(x_2))\cdots(f(x_n)-f(x_{n-1}))\\
 &=(x_1+t)(x_2-x_1)(x_3-x_2)\cdots(x_n-x_{n-1}).
 \end{align*}
\end{example}

\begin{theorem}\label{jth:detSupper}
If $S$ is upper-closed up to $\vee S$, then
\begin{equation}\label{jeq:detSupper}
\det(S)_f=\prod_{v=1}^n f(x_v)^2 \Psi_{S,1/f}(x_{v})
=\prod_{v=1}^n f(x_v)^2
 \sum_{x_v\preceq x_u} {\frac{\mu(x_v, x_u)}{f(x_u)}}.
\end{equation}
\end{theorem}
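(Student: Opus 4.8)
The plan is to obtain Theorem~\ref{jth:detSupper} as a short consequence of the machinery already assembled for join matrices, exactly paralleling the way Theorem~\ref{th:detSupper} was deduced from Theorem~\ref{th:detSjoin}. I would follow the route based on the factorization identity \eqref{jeq:dett}.

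First I would specialize \eqref{jeq:dett} to the case $X=Y=S$, which reads
\[
\det(S)_f=\Bigl(\prod_{v=1}^n f(x_v)^2\Bigr)\det[S]_{1/f}.
\]
This reduces the problem to evaluating the determinant of the ordinary join matrix $[S]_{1/f}$ under the hypothesis that $S$ is upper-closed up to $\vee S$, which is precisely the content of Theorem~\ref{th:detSupper} applied to the function $1/f$ in place of $f$. Note that $1/f$ is a well-defined complex-valued function on $P$ since $f(x)\neq 0$ for all $x\in P$ by the standing assumption of this section, and that Theorem~\ref{th:detSupper} requires no semimultiplicativity. That theorem gives
\[
\det[S]_{1/f}=\prod_{v=1}^n \Psi_{S,1/f}(x_v)
=\prod_{v=1}^n \sum_{x_v\preceq x_u}\frac{\mu(x_v,x_u)}{f(x_u)}.
\]
Multiplying the two displays and absorbing the factor $\prod_{v=1}^n f(x_v)^2$ into the product over $v$ yields both equalities of \eqref{jeq:detSupper}.

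An alternative, purely internal route is to observe that upper-closedness up to $\vee S$ implies join-closedness---for any $x_i,x_j\in S$ one has $x_i\preceq x_i\vee x_j\preceq\vee S$, whence $x_i\vee x_j\in S$---and then invoke Theorem~\ref{jth:detSjoin} directly for the first equality; the second equality follows by replacing $\mu_S$ with $\mu$, which is legitimate because for sets upper-closed up to $\vee S$ the M\"{o}bius function $\mu_S$ of $(S,\preceq)$ agrees with the restriction of $\mu$ (cf.\ the proof of Theorem~\ref{th:invSupper} and \cite[Proposition 4.6]{Aig}). I expect no genuine obstacle here, since the statement is a direct corollary of earlier results. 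The only point requiring a moment's care is exactly this identification $\mu_S=\mu$: justifying that passing from the intrinsic M\"{o}bius function of $(S,\preceq)$ to the ambient $\mu$ is valid precisely under the upper-closedness hypothesis is what distinguishes Theorem~\ref{jth:detSupper} from the join-closed Theorem~\ref{jth:detSjoin}. Everything else is the routine bookkeeping of combining the two product formulas.
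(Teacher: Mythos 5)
Your proof is correct and takes essentially the same route as the paper: the paper gives no separate proof for this theorem but states just before Theorem \ref{jth:CB} that all the meet-matrix formulas of Section 6 follow by combining \eqref{jeq:dett} (and \eqref{jeq:invv}) with the join-matrix results of Sections 4 and 5, which is precisely your first argument specializing \eqref{jeq:dett} to $X=Y=S$ and applying Theorem \ref{th:detSupper} to $1/f$. Your alternative route via join-closedness and the identification $\mu_S=\mu$ is also sound and mirrors how the paper itself deduces Theorem \ref{th:detSupper} from Theorem \ref{th:detSjoin}.
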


\begin{example}\label{ex6}
Let $(P,\preceq)=(\Zset,\leq)$, $S=\{x_1,x_1+1,x_1+2,\ldots,x_1+n-1\}$, $t\in\Cset$ a complex number such that $t\neq-x_i$ for all $x_i\in S$ and $f(x_i)=x_i+t$ for all $x_i\in S$. Now it follows from Theorem \ref{jth:detSupper} that
\begin{align*}
\det(S)_f&=\prod_{v=1}^n f(x_v)^2
 \sum_{x_v\preceq x_u}{\frac{\mu(x_v, x_u)}{f(x_u)}}
 =\prod_{v=1}^n f(x_v)^2\left(\frac{1}{f(x_v)}-\frac{1}{f(x_{v+1})}\right)\\
 &=\prod_{v=1}^n f(x_v)^2\frac{f(x_{v+1})-f(x_v)}{f(x_v)f(x_{v+1})}\\
 &=f(x_1)(f(x_2)-f(x_1))(f(x_3)-f(x_2))\cdots(f(x_n)-f(x_{n-1}))\\
 &=(x_1+t)(-1)^{n-1}.
 \end{align*}
\end{example}

\begin{corollary}\label{jco:HS}
Let $S$ be an LCM-closed set of distinct positive integers, and let
$f$ be a quasimultiplicative arithmetical function
such that $f(r)\neq0$ for all $r\in\Zset^+$. Then
\begin{equation}\label{jeq:HS}
\det(S)_f=\prod\limits_{v=1}^n
  f(x_v)^2\sum\limits_{x_v\,\mid\,z\,\mid\,\mathrm{lcm}\,S \atop {x_t\,\nmid\,z\atop v<t}}
\left[\zeta\ast\left(\frac{\mu}{f_z}\right)\right]\left(\frac{\mathrm{lcm}\,S}{z}\right).
\end{equation}
\end{corollary}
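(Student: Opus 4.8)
The plan is to obtain \eqref{jeq:HS} as the specialization of Theorem \ref{jth:detSjoin} to the divisibility lattice $(P,\preceq)=(\Zset^+,|)$. First I would verify that the hypotheses transfer: in $(\Zset^+,|)$ a set is join-closed exactly when it is LCM-closed, and $\vee S=\mathrm{lcm}\,S$. A quasimultiplicative arithmetical function is by definition a semimultiplicative function with $f(1)\neq0$, so together with the assumption $f(r)\neq0$ for all $r\in\Zset^+$ it meets the standing hypotheses of this section, and Theorem \ref{jth:detSjoin} applies verbatim with the third expression in \eqref{jeq:detSjoin} as the starting point.

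Next I would translate every order-theoretic relation into divisibility. The outer conditions $x_v\preceq z$ and $x_t\not\preceq z$ for $v<t$, together with $z\preceq\vee S$, become $x_v\mid z\mid\mathrm{lcm}\,S$ and $x_t\nmid z$, which is precisely the index set appearing in \eqref{jeq:HS}. For the inner double sum the key observation is that the relevant poset is $P_S$, which is multiple-closed up to $\mathrm{lcm}\,S$; hence its M\"obius function satisfies $\mu(z,w)=\mu(w/z)$, the number-theoretic M\"obius function, as recorded just before \eqref{eq:psimultiple}.

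The only place where genuine computation is needed is rewriting the innermost sum as a Dirichlet convolution. Substituting $w=za$, so that $z\mid w\mid\mathrm{lcm}\,S$ becomes $a\mid\frac{\mathrm{lcm}\,S}{z}$, I would obtain
\[
\sum_{z\,\mid\,w\,\mid\,\mathrm{lcm}\,S}\frac{\mu(w/z)}{f(w)}
=\sum_{a\,\mid\,\frac{\mathrm{lcm}\,S}{z}}\frac{\mu(a)}{f(za)}
=\sum_{a\,\mid\,\frac{\mathrm{lcm}\,S}{z}}\left(\frac{\mu}{f_z}\right)(a),
\]
using $f_z(a)=f(za)$ and the fact that $\mu/f_z$ denotes the pointwise quotient. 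Since summing any arithmetical function over all divisors of $N$ is exactly its Dirichlet convolution with $\zeta$ evaluated at $N$, this inner sum equals $\left[\zeta\ast\left(\frac{\mu}{f_z}\right)\right]\!\left(\frac{\mathrm{lcm}\,S}{z}\right)$. Substituting this back into the product over $v$ in \eqref{jeq:detSjoin} yields \eqref{jeq:HS}.

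I do not anticipate a real obstacle, since the statement is a direct corollary and the argument is essentially bookkeeping that converts lattice language into number-theoretic language. The one step demanding care is confirming that the M\"obius function occurring in \eqref{jeq:detSjoin} is that of $P_S$ rather than that of $\langle S\rangle$ or of all of $(\Zset^+,|)$, so that the identity $\mu(z,w)=\mu(w/z)$ is legitimate; this is guaranteed precisely because $P_S$ is multiple-closed up to $\mathrm{lcm}\,S$.
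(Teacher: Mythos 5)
Your proposal is correct and follows essentially the route the paper intends: the corollary is the specialization of Theorem \ref{jth:detSjoin} to $(\Zset^+,|)$, using that $P_S$ is multiple-closed up to $\mathrm{lcm}\,S$ so that $\mu_{P_S}(z,w)=\mu(w/z)$, and then rewriting the inner divisor sum as $\bigl[\zeta\ast(\mu/f_z)\bigr]\!\left(\frac{\mathrm{lcm}\,S}{z}\right)$ exactly as in \eqref{eq:psimultiple} with $1/f$ in place of $f$. Your attention to which M\"obius function appears is the right point of care, and your verification of the hypotheses (LCM-closed $=$ join-closed in this lattice, quasimultiplicative plus nonvanishing $\Rightarrow$ the standing assumptions of Section 6) matches the paper's setup.
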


\begin{corollary}\label{jco:BL}
Let $S$ be a set of distinct positive integers which is multiple-closed up to $\mathrm{lcm}\,S$, and let
$f$ be a quasimultiplicative arithmetical function
such that $f(r)\neq0$ for all $r\in\Zset^+$.
Then
\begin{equation}\label{jeq:BL}
\det(S)_f=\prod\limits_{v=1}^n
  f(x_v)^2
\left[\zeta\ast\left(\frac{\mu}{f_{x_v}}\right)\right]\left(\frac{\mathrm{lcm}\,S}{x_v}\right).
\end{equation}
\end{corollary}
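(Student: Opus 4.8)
The plan is to obtain Corollary \ref{jco:BL} as a direct specialization of Theorem \ref{jth:detSupper} to the divisibility lattice $(\Zset^+,|)$, so that essentially no new work beyond bookkeeping is required. The first step is to check that the hypotheses match. In $(\Zset^+,|)$ the join of $S$ is $\vee S=\mathrm{lcm}\,S$, and by Definition \ref{def2} the condition ``upper-closed up to $\vee S$'' reads: $x\in S$ whenever $y\in S$, $y\,|\,x$ and $x\,|\,\mathrm{lcm}\,S$, which is precisely the definition of ``multiple-closed up to $\mathrm{lcm}\,S$''. Moreover a quasimultiplicative $f$ is by definition semimultiplicative with $f(1)\neq0$, and the assumption $f(r)\neq0$ for all $r\in\Zset^+$ supplies the nonvanishing required by the standing hypothesis of this section. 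Hence Theorem \ref{jth:detSupper} applies and gives
\[
\det(S)_f=\prod_{v=1}^n f(x_v)^2\,\Psi_{S,1/f}(x_v).
\]

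The only remaining task is to rewrite the factor $\Psi_{S,1/f}(x_v)$ in number-theoretic language. I would do this by invoking the explicit formula \eqref{eq:psimultiple}, which holds for any arithmetical function and any set multiple-closed up to its lcm, but applied to the function $1/f$ in place of $f$ (with $D=S$). The key elementary observation is that translation commutes with inversion: since $(1/f)_{x_v}(n)=1/f(x_vn)=1/f_{x_v}(n)$, we have $(1/f)_{x_v}=1/f_{x_v}$ as arithmetical functions. Substituting $1/f$ for $f$ in \eqref{eq:psimultiple} therefore yields
\[
\Psi_{S,1/f}(x_v)=\Bigl[\zeta\ast\bigl((1/f)_{x_v}\mu\bigr)\Bigr]\!\left(\frac{\mathrm{lcm}\,S}{x_v}\right)=\left[\zeta\ast\left(\frac{\mu}{f_{x_v}}\right)\right]\!\left(\frac{\mathrm{lcm}\,S}{x_v}\right).
\]
Inserting this into the displayed determinant formula gives \eqref{jeq:BL} exactly.

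Since every step is either a substitution or a matching of definitions, there is no genuine obstacle; the only point demanding care is making sure the $1/f$ is threaded correctly through the Dirichlet-convolution identity \eqref{eq:psimultiple}, i.e. verifying that $(1/f)_{x_v}$ equals $1/f_{x_v}$ rather than $1/f$ evaluated elsewhere. One should also note that, because the hypotheses of this section already guarantee $f(x)\neq0$ everywhere, all the reciprocals appearing above --- and in particular the function $1/f$ on which $\Psi_{S,1/f}$ depends --- are well defined.
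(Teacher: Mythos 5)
Your proposal is correct and follows exactly the route the paper intends for this corollary: specialize Theorem \ref{jth:detSupper} to $(\Zset^+,|)$, where ``upper-closed up to $\vee S$'' becomes ``multiple-closed up to $\mathrm{lcm}\,S$'', and then rewrite $\Psi_{S,1/f}(x_v)$ via \eqref{eq:psimultiple} applied to $1/f$, using $(1/f)_{x_v}=1/f_{x_v}$. The paper states the corollary without proof, and your verification (including the hypothesis matching and the well-definedness of $1/f$) supplies precisely the missing bookkeeping.
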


We next derive formulas for inverses of meet matrices.
In Theorem \ref{jth:invXY} we give an expression for the inverse of $(X,Y)_{f}$
on arbitrary sets $X$ and $Y$, and in Theorem \ref{jth:invS}
we give an expression for the inverse of
$(S)_{f}$ on arbitrary set $S$.
Taking $(P,\preceq)=(\Zset^+, |)$ we could obtain a formula
for the inverse of GCD matrices on arbitrary set $S$.
In Theorems \ref{jth:invSjoin} and \ref{jth:invSupper}, respectively,
we calculate the inverse of $(S)_{f}$ in cases when $S$ is join-closed and upper-closed up to $\vee S$. Formulas similar to Theorems \ref{jth:invSjoin} and \ref{jth:invSupper}, although with stronger assumptions, have been presented earlier in \cite{KH2}.

\begin{theorem}\label{jth:invXY}
Let $X_i=X\setminus\{x_i\}$ and $Y_i=Y\setminus\{y_i\}$ for
$i=1, 2,\ldots, n$.
If $[X,Y]_{f}$ is invertible, then
the inverse of $(X,Y)_{f}$ is the $n\times n$ matrix $B=(b_{ij})$ with
\begin{eqnarray}
b_{ij}
&=&{(-1)^{i+j}\over f(x_j)f(y_i) \det(X,Y)_{f}}
   \left(\prod_{v=1}^n f(x_v)f(y_v)\right)\nonumber\\
&&\times\Biggl(\sum\limits_{1\leq k_{1}<k_{2}<\cdots <k_{n-1}\leq m}
   \det E(X_j)_{(k_{1},k_{2},\ldots ,k_{n-1})}
   \det E(Y_i)_{(k_{1},k_{2},\ldots ,k_{n-1})}\nonumber\\
&&\times \Psi_{D,1/f}(d_{k_{1}})\Psi_{D,1/f}(d_{k_{2}})\cdots
   \Psi_{D,1/f}(d_{k_{n-1}})\Biggr).
\end{eqnarray}
\end{theorem}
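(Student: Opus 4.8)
The plan is to reduce the statement to the already-proven join-matrix inverse formula of Theorem \ref{th:invXY} by means of the diagonal conjugation \eqref{jeq:invv}. First I would record the invertibility bookkeeping: since $f(x)\neq 0$ for every $x\in P$, the diagonal matrices $\Delta_{X,f}$ and $\Delta_{Y,f}$ of \eqref{eq:deltax} and \eqref{eq:deltay} are invertible, so the factorization \eqref{jeq:fac} shows that $(X,Y)_f$ is invertible if and only if $[X,Y]_{1/f}$ is. In that case \eqref{jeq:invv} gives
\[
(X,Y)_f^{-1}=\Delta_{Y,f}^{-1}[X,Y]_{1/f}^{-1}\Delta_{X,f}^{-1},
\]
so it suffices to understand the entries of $[X,Y]_{1/f}^{-1}$ and then track the effect of the two diagonal factors.

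Next I would apply Theorem \ref{th:invXY} with the function $1/f$ playing the role of $f$; this is legitimate because $1/f$ is again a genuine complex-valued function on $P$. It yields the $(i,j)$ entries $c_{ij}$ of $[X,Y]_{1/f}^{-1}$ as
\[
c_{ij}=\frac{(-1)^{i+j}}{\det[X,Y]_{1/f}}\sum_{1\le k_1<\cdots<k_{n-1}\le m}\det E(X_j)_{(k_1,\ldots,k_{n-1})}\det E(Y_i)_{(k_1,\ldots,k_{n-1})}\Psi_{D,1/f}(d_{k_1})\cdots\Psi_{D,1/f}(d_{k_{n-1}}).
\]

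The remainder is routine substitution. Left multiplication by $\Delta_{Y,f}^{-1}$ scales row $i$ by $1/f(y_i)$ and right multiplication by $\Delta_{X,f}^{-1}$ scales column $j$ by $1/f(x_j)$, so the $(i,j)$ entry of $(X,Y)_f^{-1}$ is $b_{ij}=c_{ij}/\bigl(f(y_i)f(x_j)\bigr)$. I would then invoke the determinant identity \eqref{jeq:dett}, namely $\det[X,Y]_{1/f}=\det(X,Y)_f/\prod_{v=1}^n f(x_v)f(y_v)$, to eliminate $\det[X,Y]_{1/f}$ in favour of $\det(X,Y)_f$. Substituting this together with the displayed expression for $c_{ij}$ into $b_{ij}=c_{ij}/\bigl(f(y_i)f(x_j)\bigr)$ reproduces the claimed formula verbatim.

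I do not expect a genuine obstacle: the statement is simply the meet-matrix shadow of Theorem \ref{th:invXY}, obtained through the single relation \eqref{jeq:invv}. The only point demanding care is the index accounting for the two diagonal conjugators—keeping straight that $\Delta_{Y,f}^{-1}$ acts on the left (rows, index $i$, arguments $y_i$) while $\Delta_{X,f}^{-1}$ acts on the right (columns, index $j$, arguments $x_j$)—since this is precisely what places $f(x_j)f(y_i)$ in the denominator rather than $f(x_i)f(y_j)$.
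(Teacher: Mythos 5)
Your argument is correct and is exactly the route the paper intends: Section 6 states that all of its formulas follow "using (\ref{jeq:dett}), (\ref{jeq:invv}) and the formulas of Sections 4 and 5," which is precisely your combination of Theorem \ref{th:invXY} applied to $1/f$ with the diagonal conjugation (\ref{jeq:invv}) and the determinant identity (\ref{jeq:dett}). Your care with which diagonal factor acts on rows versus columns correctly accounts for the $f(x_j)f(y_i)$ in the denominator, and your observation that invertibility of $(X,Y)_f$ is equivalent to that of $[X,Y]_{1/f}$ is the condition actually needed for the formula to make sense.
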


\begin{theorem}\label{jth:invS}
Let $S_i=S\setminus\{x_i\}$ for $i=1, 2,\ldots, n$.
If $(S)_{f}$ is invertible, then
the inverse of $(S)_{f}$ is the $n\times n$ matrix $B=(b_{ij})$ with
\begin{eqnarray}
b_{ij}
&=&{(-1)^{i+j}\over f(x_i)f(x_j) \det(S)_{f}}
   \left(\prod_{v=1}^n f(x_v)^2\right)\nonumber\\
&&\times\Biggl(\sum\limits_{1\leq k_{1}<k_{2}<\cdots <k_{n-1}\leq m}
   \det E(S_i)_{(k_{1},k_{2},\ldots ,k_{n-1})}
   \det E(S_j)_{(k_{1},k_{2},\ldots ,k_{n-1})}\nonumber\\
&&\times \Psi_{D,1/f}(d_{k_{1}})\Psi_{D,1/f}(d_{k_{2}})\cdots
   \Psi_{D,1/f}(d_{k_{n-1}})\Biggr).
\end{eqnarray}
\end{theorem}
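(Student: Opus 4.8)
The plan is to derive Theorem~\ref{jth:invS} as a direct specialization of Theorem~\ref{jth:invXY}, exactly parallel to how Theorem~\ref{th:invS} was obtained from Theorem~\ref{th:invXY} in the join-matrix setting. First I would set $X=Y=S$, so that $x_i$ and $y_i$ both denote the same element $x_i$ of $S$, and consequently $X_j=S_j$ and $Y_i=S_i$. Under this identification the prefactor $f(x_j)f(y_i)$ in Theorem~\ref{jth:invXY} becomes $f(x_i)f(x_j)$ (note the index swap $i\leftrightarrow j$ coming from the cofactor/transpose in the inverse), the product $\prod_{v=1}^n f(x_v)f(y_v)$ becomes $\prod_{v=1}^n f(x_v)^2$, and $\det(X,Y)_f$ becomes $\det(S)_f$. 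Substituting these into the formula for $b_{ij}$ in Theorem~\ref{jth:invXY} yields precisely the expression stated in Theorem~\ref{jth:invS}, with the submatrices $E(X_j)=E(S_j)$ and $E(Y_i)=E(S_i)$ appearing in the summation.

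The one genuine hypothesis to verify is the invertibility condition. Theorem~\ref{jth:invXY} requires $[X,Y]_f$ (equivalently $[X,Y]_{1/f}$, via the standing assumption $f(x)\neq 0$) to be invertible, whereas Theorem~\ref{jth:invS} is phrased under the assumption that $(S)_f$ is invertible. I would reconcile these using the factorization~\eqref{jeq:fac} specialized to $X=Y=S$, namely $(S)_f=\Delta_{S,f}[S]_{1/f}\Delta_{S,f}$, together with the determinant identity~\eqref{jeq:dett}. Since $f(x)\neq 0$ for all $x\in P$ by the standing assumption of this section, the diagonal matrices $\Delta_{S,f}$ are invertible, so $(S)_f$ is invertible if and only if $[S]_{1/f}$ is. Hence the hypothesis ``$(S)_f$ invertible'' in Theorem~\ref{jth:invS} is equivalent to the hypothesis needed to invoke Theorem~\ref{jth:invXY} at $X=Y=S$, and no separate argument is required.

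The proof therefore reduces to a one-line substitution, and the only thing demanding care is bookkeeping of the indices: the diagonal correction factor sits on $f(x_i)f(x_j)$ rather than on a single index, and one must check that the symmetric roles of $i$ and $j$ in $E(S_i)$ and $E(S_j)$ are recorded in the right order. I do not anticipate a real obstacle here, since the whole section is deliberately built so that the meet-matrix results mirror the join-matrix results through the factorization in Theorem~\ref{jth:fac}; the substitution is routine once Theorem~\ref{jth:invXY} is in hand.

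\begin{proof}
Taking $X=Y=S$ in Theorem \ref{jth:invXY} we obtain
Theorem \ref{jth:invS}.
\end{proof}
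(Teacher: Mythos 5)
Your proposal is correct and matches the paper's (implicit) argument exactly: the paper derives all of Section 6 by specializing the two-set formulas, and its proof of the analogous join-matrix result (Theorem \ref{th:invS}) is literally ``Taking $X=Y=S$ in Theorem \ref{th:invXY}.'' One small caution: your parenthetical claim that $[X,Y]_f$ invertible is ``equivalently'' $[X,Y]_{1/f}$ invertible because $f$ is nonvanishing is not literally true (entrywise reciprocation does not preserve invertibility); the hypothesis in Theorem \ref{jth:invXY} is evidently a typo for $(X,Y)_f$ (equivalently $[X,Y]_{1/f}$ via the factorization \eqref{jeq:fac}), and your reconciliation through $\Delta_{S,f}$ handles this correctly.
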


\begin{theorem}\label{jth:invSjoin}
Suppose that $S$ is join-closed.
If $(S)_f$ is
invertible, then the inverse of $(S)_{f}$ is the $n\times n$ matrix
$B=(b_{ij})$ with
\begin{equation}\label{jeq:invSjoin}
b_{ij}
={1\over f(x_i)f(x_j)}
\sum_{x_k\preceq x_i\wedge x_j}
{\mu_S(x_k, x_i)\mu_S(x_k, x_j)\over \Psi_{S,1/f}(x_k)}.
\end{equation}
Here $\mu_S$ is the M\"{o}bius function of the poset $(S,\preceq)$.
\end{theorem}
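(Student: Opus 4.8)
The plan is to derive Theorem~\ref{jth:invSjoin} from the already-established join-matrix result, Theorem~\ref{th:invSjoin}, by means of the factorization~\eqref{jeq:invv}. Recall that the standing assumption of this section is that $f$ is semimultiplicative with $f(x)\neq 0$ for all $x\in P$, so $1/f$ is a well-defined function on $P$ and the diagonal matrices $\Delta_{X,f}$, $\Delta_{Y,f}$ from~\eqref{eq:deltax}--\eqref{eq:deltay} are invertible. Setting $X=Y=S$, the relation~\eqref{jeq:invv} becomes
\begin{equation}
(S)_{f}^{-1}=\Delta_{S,f}^{-1}\,[S]_{1/f}^{-1}\,\Delta_{S,f}^{-1},
\end{equation}
so the $ij$-entry of $(S)_f^{-1}$ is simply the $ij$-entry of $[S]_{1/f}^{-1}$ divided by $f(x_i)f(x_j)$. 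This reduces the problem to computing $[S]_{1/f}^{-1}$ on a join-closed set $S$.

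First I would invoke Theorem~\ref{th:invSjoin}, which gives the inverse of the join matrix $[S]_g$ for any function $g$ on a join-closed set $S$. Applying it with $g=1/f$, the $ij$-entry of $[S]_{1/f}^{-1}$ is, by formula~\eqref{eq:invSjoin},
\begin{equation}
\sum_{x_k\preceq x_i\wedge x_j}
\frac{\mu_S(x_k, x_i)\,\mu_S(x_k, x_j)}{\Psi_{S,1/f}(x_k)}.
\end{equation}
Here the hypothesis that $[S]_f$ be invertible must be translated: by the determinant relation~\eqref{jeq:dett} (with $X=Y=S$), $\det(S)_f=\left(\prod_v f(x_v)^2\right)\det[S]_{1/f}$, and since no $f(x_v)$ vanishes, $(S)_f$ is invertible if and only if $[S]_{1/f}$ is, so the hypotheses of Theorem~\ref{th:invSjoin} applied to $1/f$ are exactly met.

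The final step is to multiply through by the diagonal factors. Combining the displayed expression for the $ij$-entry of $[S]_{1/f}^{-1}$ with the factor $1/\bigl(f(x_i)f(x_j)\bigr)$ coming from $\Delta_{S,f}^{-1}$ on each side yields precisely~\eqref{jeq:invSjoin}. I expect no genuine obstacle here; the argument is a direct transport of an established formula through an explicit congruence-type factorization. The only point demanding a moment's care is the bookkeeping of which index is paired with which diagonal entry, but because the left and right diagonal factors are identical when $X=Y=S$, and because the summand in~\eqref{eq:invSjoin} is already symmetric in $x_i$ and $x_j$, this causes no trouble.
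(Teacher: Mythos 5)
Your proposal is correct and follows exactly the route the paper intends: the paper gives no separate proof of Theorem~\ref{jth:invSjoin}, covering it instead by the blanket remark that the meet-matrix formulas follow from \eqref{jeq:dett}, \eqref{jeq:invv} and the results of Sections 4 and 5, which is precisely the combination of \eqref{jeq:invv} with Theorem~\ref{th:invSjoin} applied to $1/f$ that you carry out. Your explicit check that invertibility of $(S)_f$ is equivalent to that of $[S]_{1/f}$ (via \eqref{jeq:dett} and $f(x)\neq 0$) is a detail the paper leaves implicit.
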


\begin{example}\label{ex7}
By Theorem \ref{jth:invSjoin}, the inverse of the MIN matrix $(S)_f$ in Example \ref{ex5} is the $n\times n$ tridiagonal matrix $B=(b_{ij})$ with
\[
b_{ij} = \left\{ \begin{array}{ll}
0 & \textrm{if $|i-j|>1$},\\
\frac{1}{x_{2}-x_{1}}\frac{x_2+t}{x_1+t} & \textrm{if $i=j=1$},\\
\frac{1}{x_i+t}\left(\frac{x_{i-1}+t}{x_i-x_{i-1}}+\frac{x_{i+1}+t}{x_{i+1}-x_i}\right) & \textrm{if $1<i=j<n$},\\
\frac{1}{x_n+t}\left(\frac{x_{n-1}+t}{x_n-x_{n-1}}+1\right) & \textrm{if $i=j=n$},\\
\frac{-1}{|x_i-x_j|} & \textrm{if $|i-j|=1$}.
\end{array} \right.
\]
\end{example}

\begin{theorem}\label{jth:invSupper}
Suppose that $S$ is
upper-closed up to $\vee S$.
If $(S)_f$ is
invertible, then the inverse of $(S)_{f}$ is the $n\times n$ matrix
$B=(b_{ij})$, where
\begin{equation}\label{jeq:invSlower}
b_{ij}
={1\over f(x_i)f(x_j)}
\sum_{x_k\preceq x_i\wedge x_j}
{\mu(x_k, x_i)\mu(x_k, x_j)\over \Psi_{S,1/f}(x_k)}.
\end{equation}
Here $\mu$ is the M\"{o}bius function of $(P,\preceq)$.
\end{theorem}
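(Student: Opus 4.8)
The plan is to obtain \eqref{jeq:invSlower} from Theorem~\ref{jth:invSjoin} by showing that, under the stronger hypothesis that $S$ is upper-closed up to $\vee S$, the M\"{o}bius function $\mu_S$ of the subposet $(S,\preceq)$ coincides with the M\"{o}bius function $\mu$ of the ambient lattice $(P,\preceq)$ on all relevant pairs. This mirrors exactly the reduction used in the proof of Theorem~\ref{th:invSupper}.

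First I would verify that Theorem~\ref{jth:invSjoin} is applicable, i.e.\ that upper-closed up to $\vee S$ implies join-closed: for any $x,y\in S$ we have $x\preceq x\vee y\preceq\vee S$, so upper-closure forces $x\vee y\in S$. Thus $S$ is join-closed and Theorem~\ref{jth:invSjoin} gives
\[
b_{ij}={1\over f(x_i)f(x_j)}\sum_{x_k\preceq x_i\wedge x_j}{\mu_S(x_k,x_i)\,\mu_S(x_k,x_j)\over\Psi_{S,1/f}(x_k)}.
\]

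The key step is then to prove $\mu_S(x_k,x_u)=\mu(x_k,x_u)$ whenever $x_k,x_u\in S$ and $x_k\preceq x_u$. For this I would observe that the whole interval $[x_k,x_u]$ formed in $(P,\preceq)$ already lies in $S$: any $z$ with $x_k\preceq z\preceq x_u$ satisfies $z\preceq x_u\preceq\vee S$, so upper-closure up to $\vee S$ yields $z\in S$. Hence the intervals $[x_k,x_u]$ taken in $(S,\preceq)$ and in $(P,\preceq)$ are literally the same poset, and since the M\"{o}bius function of a locally finite poset is determined recursively by its intervals (see \cite[Proposition 4.6]{Aig}), the two values agree. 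Substituting $\mu_S=\mu$ into the displayed formula gives \eqref{jeq:invSlower}.

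There is essentially no computational obstacle here; the only point demanding care is the interval-containment argument, and it is precisely where the hypothesis ``upper-closed up to $\vee S$'' (rather than merely join-closed) is used. Once the two intervals are identified, the equality of the M\"{o}bius functions, and hence of the two inverse formulas, is immediate.
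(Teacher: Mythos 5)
Your proposal is correct and follows essentially the same route as the paper: the paper derives this result from Theorem~\ref{jth:invSjoin} by noting that upper-closure up to $\vee S$ forces $\mu_S=\mu$ on $S$ (citing Proposition 4.6 of Aigner), exactly the reduction you carry out. Your explicit verification that upper-closure implies join-closure and that the intervals $[x_k,x_u]$ in $(S,\preceq)$ and $(P,\preceq)$ coincide simply spells out the details the paper leaves to the reference.
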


\begin{example}\label{ex8}
By Theorem \ref{jth:invSupper}, the inverse of the MIN matrix $(S)_f$ in Example \ref{ex6} is the $n\times n$ tridiagonal matrix $B=(b_{ij})$, where
\[
b_{ij} = \left\{ \begin{array}{ll}
0 & \textrm{if $|i-j|>1$},\\
\frac{x_1+1+t}{x_1+t} & \textrm{if $i=j=1$},\\
2 & \textrm{if $1<i=j<n$},\\
1 & \textrm{if $i=j=n$},\\
-1 & \textrm{if $|i-j|=1$}.
\end{array} \right.
\]
\end{example}

\begin{corollary}\label{jco:BL2}
Let $S$ be a set of distinct
positive integers which is multiple-closed up to $\mathrm{lcm}\,S$, and let
$f$ be a quasimultiplicative arithmetical function
such that $f(r)\neq0$ for all $r\in\Zset^+$.
If the GCD matrix $(S)_f$ is invertible,
then its inverse is the $n\times n$ matrix
$B=(b_{ij})$, where
\begin{equation}\label{jeq:BL2}
b_{ij}
={1\over f(x_i)f(x_j)}\sum\limits_{ x_k\,\mid\,(x_i, x_j) }
{\mu(x_i/x_k)\mu(x_j/x_k)\over \left[\zeta\ast\left(\frac{\mu}{f_{x_k}}\right)\right]\left(\frac{\mathrm{lcm}\,S}{x_k}\right)}.
\end{equation}
Here $\mu$ is the number-theoretic M\"{o}bius function.
\end{corollary}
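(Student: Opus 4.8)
The plan is to obtain this corollary as the specialization of Theorem \ref{jth:invSupper} to the divisibility lattice $(P,\preceq)=(\Zset^+,|)$. First I would check that every hypothesis of Theorem \ref{jth:invSupper} is in force. Since $f$ is quasimultiplicative it is in particular semimultiplicative with $f(r)\neq0$ for all $r\in\Zset^+$, so it satisfies the running assumption of this section with $P=\Zset^+$. In the divisibility lattice the join $\vee S$ equals $\mathrm{lcm}\,S$, and the notion ``upper-closed up to $\vee S$'' of Definition \ref{def2} reduces exactly to ``multiple-closed up to $\mathrm{lcm}\,S$'' of Definition \ref{def1}; thus the hypothesis on $S$ is precisely the one assumed here. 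Invertibility of $(S)_f$ is assumed outright, so Theorem \ref{jth:invSupper} applies verbatim.

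Next I would translate each poset-theoretic ingredient of the formula
\[
b_{ij}=\frac{1}{f(x_i)f(x_j)}\sum_{x_k\preceq x_i\wedge x_j}\frac{\mu(x_k,x_i)\,\mu(x_k,x_j)}{\Psi_{S,1/f}(x_k)}
\]
from Theorem \ref{jth:invSupper} into number-theoretic language. The meet $x_i\wedge x_j$ is the greatest common divisor $(x_i,x_j)$, so the index condition $x_k\preceq x_i\wedge x_j$ becomes $x_k\mid(x_i,x_j)$, which in particular forces $x_k\mid x_i$ and $x_k\mid x_j$. The M\"obius function $\mu$ of $(\Zset^+,|)$ appearing in Theorem \ref{jth:invSupper} is, on each interval, the number-theoretic M\"obius function of the quotient, as recorded just before \eqref{eq:psimultiple}; hence $\mu(x_k,x_i)=\mu(x_i/x_k)$ and $\mu(x_k,x_j)=\mu(x_j/x_k)$, with $\mu$ now read as the number-theoretic M\"obius function, and both arguments are genuine positive integers by the divisibility constraints above.

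The only computation carrying any content is the evaluation of $\Psi_{S,1/f}(x_k)$, which I would perform using \eqref{eq:psimultiple} with the function $1/f$ and the set $D=S$ (legitimate since $S$ is multiple-closed up to $\mathrm{lcm}\,S$). Because $(1/f)_{x_k}(n)=1/f(x_k n)=1/f_{x_k}(n)$, we have $(1/f)_{x_k}\,\mu=\mu/f_{x_k}$, and therefore
\[
\Psi_{S,1/f}(x_k)=\left[\zeta\ast\left(\frac{\mu}{f_{x_k}}\right)\right]\!\left(\frac{\mathrm{lcm}\,S}{x_k}\right).
\]
Substituting this expression, together with the translated summation range and the two M\"obius values, into the displayed formula yields exactly \eqref{jeq:BL2}. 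I do not expect a genuine obstacle; the single point demanding care is keeping the two incarnations of $\mu$ distinct—the poset M\"obius function of $(\Zset^+,|)$ versus the number-theoretic $\mu$—and confirming that they coincide on the relevant intervals by virtue of $S$ being multiple-closed up to $\mathrm{lcm}\,S$.
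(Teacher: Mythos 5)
Your proposal is correct and is precisely the derivation the paper intends: the corollary is stated without an explicit proof, as the immediate specialization of Theorem \ref{jth:invSupper} to $(P,\preceq)=(\Zset^+,|)$, with $x_i\wedge x_j=(x_i,x_j)$, $\mu(x_k,x_i)=\mu(x_i/x_k)$, and $\Psi_{S,1/f}(x_k)$ evaluated via \eqref{eq:psimultiple} applied to $1/f$, exactly as you do. Your identification of $(1/f)_{x_k}\mu=\mu/f_{x_k}$ is the one small computation required, and you carry it out correctly.
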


\end{document}